\title[The existence of quasiconformal homeomorphism ...]{The existence of quasiconformal homeomorphism \\between planes with countable marked points}
\author[H. FUJINO]{ {\tiny By} \vspace{1ex} \\  HIROKI\ \ \  FUJINO \vspace{-0ex}}
\address{Graduate~School~of~Mathematics, Nagoya~University, Furo-cho Chikusa-ku Nagoya 464-8602, Japan}
\email{m12040w@math.nagoya-u.ac.jp}
\subjclass[2010]{Primary~51M04, Secondary~51M05.}
\keywords{quasiconformal mappings, extremal distance, and quasicircle.}
\date{\today}
  \def\@seccntformat#1{\csname the#1\endcsname.\quad}
\newtheorem{thm}{Theorem}[section]
\newtheorem{proposition}[thm]{Proposition}
\newtheorem{lemma}[thm]{Lemma}
\newtheorem{theo}{定理}  
\newtheorem{thma}[theo]{Theorem}  
\newtheorem{coroa}[theo]{Corollary}  
\newtheorem{probemp}{Problem}
\newcommand{\tei}{Teichm\"uller } 
\newcommand{\vai}{V\"ais\"al\"a}    
\newcommand{\edc}[2]{\delta^{\mathbb{C}}\left( #1,#2 \right)}
\newcommand{\edd}[2]{\delta^{D}(#1,#2)}
\newcommand{\zokud}[2]{\mathscr{F}^{D}(#1,#2)}
\newcommand{\mode}{{\rm mod}}
\newcommand{\aut}{{\rm Aut}}
\def\syoumeiowari{\hfill $\Box$}        
\begin{document}

\begin{abstract}
     
We consider quasiconformal deformations of $\mathbb{C}\setminus\mathbb{Z}$.
We give some criteria for infinitely often punctured planes to be
quasiconformally equivalent to $\mathbb{C}\setminus\mathbb{Z}$.
In particular, we characterize the closed subsets of $\mathbb{R}$
whose compliments are quasiconformally equivalent to $\mathbb{C}\setminus\mathbb{Z}$.     
     
\end{abstract}

\maketitle　

\section{Introduction}   
\label{chap1}
\setcounter{page}{1}
\renewcommand{\thepage}{\arabic{page}}
    
Let $R$ be a Riemann surface.
The \tei  space $T(R)$ is a space which describes all quasiconformal deformations of $R$.
It is well known that $T(R)$ becomes either a finite dimensional complex manifold or a
non-separable infinite dimensional Banach analytic manifold.
$T(R)$ becomes finite dimensional if and only if $R$ is of finite type.
Through the investigation of quasiconformal deformations of a certain infinite type Riemann surface, 
a certain characteristic subspace will be found, which is separable.\vspace{1ex}

The universal \tei space $T(\mathbb{D})$ simultaneously describes all quasiconformal deformations
of all hyperbolic type Riemann surfaces. This arises from the fact that 
each covering $X\rightarrow Y$ induces an embedding of $T(Y)$ into $T(X)$.
On the other hand, $\mathbb{C}\setminus\mathbb{Z}$ covers a certain $n$-punctured 
Riemann sphere for each $n\geq 3$.
Namely $T(\mathbb{C}\setminus\mathbb{Z})$ simultaneously describes all quasiconformal 
deformations of Riemann surfaces of genus $0$ with at least three punctures.
Needless to say, the universal \tei space $T(\mathbb{D})$ also describes them.
However for the reasons mentioned below, 
the \tei space $T(\mathbb{C}\setminus\mathbb{Z})$ is more suitable to describe them than $T(\mathbb{D})$.

For each positive integer $n$, let $R_n=(\mathbb{C}\setminus \mathbb{Z})/\langle z+n\rangle$. 
$R_n$ is an $(n+2)$-punctured Riemann
sphere, and the projection $p_n:\mathbb{C}\setminus\mathbb{Z} \rightarrow R_n$ induces the
embedding $p_n^{\ast}:T(R_n)\hookrightarrow T(\mathbb{C}\setminus \mathbb{Z})$.
The covering transformation group of $p_n$ is the cyclic group $\langle z+n\rangle$, so that,
quasiconformal deformations of $R_n$ correspond to periodic quasiconformal
deformations of $\mathbb{C}\setminus\mathbb{Z}$ with only a period $z+n$.
Then it is shown from McMullen's theorem in \cite{mcmullen1} that $p_n^{\ast}$
is totally geodesic for the \tei metric. By contrast, the embedding of $T(R_n)$ into $T(\mathbb{D})$ is not 
totally geodesic (cf. The Kra--McMullen theorem \cite{mcmullen1}).
Additionally, 
$\mathbb{C}\setminus \mathbb{Z}$ is considered to be one of the smallest Riemann surface which has the above properties, that is,
there exists no Riemann surface except $\mathbb{C}\setminus\mathbb{Z}$ 
which is covered by $\mathbb{C}\setminus\mathbb{Z}$ and covers $R_n$ for all $n$.

Thus, in this paper, we would like to investigate quasiconformal deformations of
$\mathbb{C}\setminus\mathbb{Z}$. In particular, we shall try to find all 
Riemann surfaces which are quasiconformally equivalent to $\mathbb{C}\setminus\mathbb{Z}$.\\

This attempt is reduced to the existence problem of quasiconformal homeomorphism between planes with countable marked points.
In fact, if $R$ is quasiconformally equivalent to $\mathbb{C}\setminus\mathbb{Z}$, 
then $R$ is conformally equivalent to $\mathbb{C}\setminus E$ by a certain closed discrete 
subset $E\subset \mathbb{C}$ (cf. The removable singularity theorem, see \cite[Theorem\ 17.3.]{vaisala2}).
Henceforth, we say that two subsets $E,E'\subset\mathbb{C}$ are quasiconformally equivalent if 
there exists a quasiconformal self-homeomorphism of $\mathbb{C}$ which maps $E$ onto $E'$.
We consider the following problem.
\begin{probemp}
Let $\mathscr{P}$ be the family of all closed discrete infinite subsets $E\subset \mathbb{C}$.
Find all $E \in \mathscr{P}$ which is quasiconformally equivalent to $\mathbb{Z}$.
\end{probemp}

This Problem is analogous to the problem investigated by P. MacManus.
In his paper \cite{macmanus1}, he considered the \textit{usual Cantor--middle--third set} (the \textit{Cantor ternary set}) 
\[
\mathscr{C}=[0,1]\setminus \bigcup_{m=1}^{\infty} \bigcup_{k=0}^{3^{m-1}-1} \left( \frac{3k+1}{3^m},\frac{3k+2}{3^m}\right)
\]
instead of $\mathbb{Z}$. Further he completely characterized subsets which is quasiconformally equivalent to $\mathscr{C}$ 
by several conditions of the Euclidean geometry.\\

First, when we take the MacManus proof into consideration, it seems significant to solve our Problem for $E\in \mathscr{P}$
contained in the real line. In this particular case, we obtain the next theorem.

\begin{thma} \label{thmB}
For a monotone increasing sequence $E=\{a_n\}_{n\in \mathbb{Z}} \subset \mathbb{R}$ with 
$a_n \rightarrow \pm \infty$ as $n\rightarrow \pm \infty$, the following conditions are equivalent.
\begin{enumerate}\setlength{\leftskip}{4ex}
\item $E$ is quasiconformally equivalent to $\mathbb{Z}$.
\item There exists a quasiconformal homeomorphism of $\mathbb{C}$ such that $f(n)=a_n$ for all $n\in \mathbb{Z}$.
\item There exists $M\geq 1$ such that the following inequality holds for all $n\in \mathbb{Z},\ k\in \mathbb{N}$;
\[
\frac{1}{M} \leq \frac{a_{n+k}-a_n}{a_n-a_{n-k}} \leq M.
\]
\end{enumerate}
\end{thma}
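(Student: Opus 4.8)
The plan is to establish the cycle $(2)\Rightarrow(1)\Rightarrow(3)\Rightarrow(2)$, using two standard facts as engines. First, every $K$-quasiconformal self-homeomorphism of $\mathbb{C}$ is globally $\eta$-quasisymmetric, with $\eta$ depending only on $K$ (V\"ais\"al\"a); extended to $\hat{\mathbb{C}}$ it fixes $\infty$ and is $\theta$-quasi-M\"obius. Second, the Beurling--Ahlfors extension theorem: an increasing quasisymmetric self-homeomorphism of $\mathbb{R}$ extends to a quasiconformal self-homeomorphism of the upper half-plane, hence, by reflection across $\mathbb{R}$, of $\mathbb{C}$.

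The implication $(2)\Rightarrow(1)$ is immediate, since an $f$ as in (2) already carries $\mathbb{Z}$ onto $E$. For $(3)\Rightarrow(2)$ I would set $h(n)=a_n$ and interpolate affinely on each $[n,n+1]$; as $a_n\to\pm\infty$ monotonically, $h$ is an increasing homeomorphism of $\mathbb{R}$. The substance of this step is to upgrade the integer-scale bound (3) to full quasisymmetry of $h$, that is, a bound on $(h(x+t)-h(x))/(h(x)-h(x-t))$ for all $x$ and $t>0$. Writing $x\in[m,m+1)$ and $k\approx t$, one has $h(x+t)-h(x)\approx a_{m+k}-a_m=\sum_{j=0}^{k-1}g_{m+j}$ and $h(x)-h(x-t)\approx a_m-a_{m-k}=\sum_{j=1}^{k}g_{m-j}$, with $g_n=a_{n+1}-a_n$; condition (3) asserts precisely that these two sums are comparable, and each fractional-part correction is dominated by a single gap already occurring in the relevant sum, the case $k=1$ of (3) (comparability of consecutive gaps) controlling the end effects. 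Granting $h$ quasisymmetric, Beurling--Ahlfors yields a quasiconformal $f$ on $\mathbb{C}$ with $f|_{\mathbb{R}}=h$, so $f(n)=a_n$.

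The implication $(1)\Rightarrow(3)$ carries the real difficulty. The easy half is the special case already present in (2): applying $\eta$-quasisymmetry to the symmetric triple $(n-k,\,n,\,n+k)$, whose ratio of source distances is $1$, gives $1/\eta(1)\le (a_{n+k}-a_n)/(a_n-a_{n-k})\le \eta(1)$ as soon as the map sends $n\mapsto a_n$. The obstacle is that a merely set-level equivalence $g(\mathbb{Z})=E$ need not respect the natural labels: writing $g(n)=a_{\sigma(n)}$ for a bijection $\sigma$ of $\mathbb{Z}$, the order may be scrambled, since local half-twists about pairs of marked points are themselves quasiconformally realizable. The crux is therefore to prove that $\sigma$ is a bounded perturbation of an order isomorphism of $\mathbb{Z}$, that is, coarsely monotone; once this is known, the quasisymmetry estimates transfer to the natural indexing (up to bounded index shifts, absorbed using the $k=1$ gap comparability) and deliver (3).

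I expect this control of $\sigma$ to be the principal technical obstacle. It must be carried out in a scale-free fashion, since the gaps $g_n$ of $E$ may be arbitrarily small and the purely metric quasisymmetry of $g$ then loses its grip; the quasi-M\"obius (cross-ratio) form of the invariance, applied to symmetric quadruples completed by the fixed point $\infty$ and exploiting the uniform spacing of $\mathbb{Z}$, is what I would use to pin down $|\sigma(n)-\sigma(m)|\asymp|n-m|$ at the needed scales. Alternatively, once $\sigma$ is shown coarsely monotone one may close the cycle through $(1)\Rightarrow(2)$ directly, correcting $g$ by a uniformly quasiconformal self-homeomorphism of $(\mathbb{C},\mathbb{Z})$ that realizes the bounded-displacement permutation $\sigma^{-1}$, after which $f(n)=a_n$ by construction.
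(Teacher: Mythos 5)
Your architecture is sound and your implications $(2)\Rightarrow(1)$ and $(3)\Rightarrow(2)$ coincide with the paper's: the paper also interpolates affinely on $[n,n+1]$, verifies $C(M)$-quasisymmetry with $C(M)=M^4+M^3+M^2+M$ by a case analysis on the fractional parts, and invokes Beurling--Ahlfors. You have also correctly diagnosed that the whole difficulty of $(1)\Rightarrow(3)$ is the scrambled labelling: a quasiconformal $f$ with $f(E)=\mathbb{Z}$ gives $f(a_n)=\sigma(n)$ for an a priori wild bijection $\sigma$ of $\mathbb{Z}$. But that diagnosis is where your argument stops: the coarse monotonicity $|\sigma(n)-\sigma(n+k)|\asymp k$ is exactly the paper's Proposition \ref{main} (via Lemmas \ref{lem2} and \ref{lem3}), and you offer only the expectation that quasi-M\"obius cross-ratio estimates ``would'' pin it down, without an actual mechanism. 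This is a genuine gap, not a routine verification. The paper's mechanism is worth noting because it is not a cross-ratio argument at all: since $f(\mathbb{R})$ is a subarc of a quasicircle through $\infty$, the Ahlfors three-point condition applies to the points $f(a_j)$, which exhaust $\mathbb{Z}$ and sit on the arc in the order of $j$. If $|f(a_n)-f(a_{n+1})|$ were large, two adjacent integers in between would be hit, one from an index $\le n$ and one from an index $\ge n+1$, forcing a three-point violation; this gives $|f(a_n)-f(a_{n+1})|\le 2A$, and a counting argument (at most $k-1$ integers in an interval of length $k-1$) gives $|f(a_n)-f(a_{n+k})|\ge (k-1)/2A$. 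The combinatorial use of the arc order together with the surjectivity onto $\mathbb{Z}$ is the missing idea.

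Two further remarks. First, your closing step --- applying $\eta$-quasisymmetry of $f^{-1}$ to the symmetric image triple once $|f(a_{n+k})-f(a_n)|\asymp|f(a_n)-f(a_{n-k})|$ is known --- is legitimate and is in fact a shortcut past the paper's final computation, which instead runs a modulus estimate on the concentric circles $S^1(a_n,|a_{n\pm k}-a_n|)$ combined with Vuorinen's lower bound; applying it directly to the triple $(a_{n-k},a_n,a_{n+k})$ avoids any relabelling issues in this last step. Second, your phrase ``absorbed using the $k=1$ gap comparability'' is circular as stated: the comparability of consecutive gaps of $E$ is the $k=1$ case of the conclusion $(3)$ and cannot be assumed while transferring estimates across bounded index shifts. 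The paper's route never needs such an absorption, precisely because all of its estimates are phrased on the image side, where the points are integers and distances are controlled by Lemmas \ref{lem2} and \ref{lem3} alone.
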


The last condition derives from the concept of $M$-quasisymmetry. 
Theorem \ref{lem} (proved in Section \ref{lemma}) shows that if $E\in \mathscr{P}$ lying on the real line is quasiconformally equivalent to $\mathbb{Z}$,
then $E$ can not be bounded from above and below. Therefore the assumption in Theorem \ref{thmB} is necessarily required.
Further Theorem \ref{thmB} completely characterizes the subsets of $\mathbb{R}$ which are quasiconformally equivalent to $\mathbb{Z}$.\\

Next, we observe $E\in \mathscr{P}$ whose compliment has an automorphism of infinite order.
In this case, we obtain the next theorem.
\begin{thma} \label{thmA}\ 
Let $E\in \mathscr{P}$ which has the following form;
\[
E=\mathbb{Z}+\{ a_n\}_{n=1}^{m} 
\]
where each $a_n$ satisfies ${\rm Re}(a_n) \in [0,1)$. 

Then, $E$ is quasiconformally equivalent to $\mathbb{Z}$ if and only if $m<+\infty$.
\end{thma}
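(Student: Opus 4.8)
The plan is to prove the two implications separately: the finite case by an explicit construction that reduces matters to Theorem \ref{thmB}, and the infinite case by producing a conformal obstruction.

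For the direction $m<+\infty\Rightarrow E\sim\mathbb{Z}$, I would first straighten the finitely many rows onto the real axis. Since $\mathrm{Re}(a_n)\in[0,1)$ and there are only $m$ of them, the closed fundamental strip $\{0\le\mathrm{Re}\,z\le 1\}$ contains exactly $a_1,\dots,a_m$. I construct a diffeomorphism of this strip pushing each $a_n$ to a real point $b_n\in[0,1)$, equal to the identity outside a bounded-height region and agreeing with a unit horizontal translation on the two boundary lines. Extending by the rule $h(z+1)=h(z)+1$ produces a $1$-periodic quasiconformal self-homeomorphism $h$ of $\mathbb{C}$ (its dilatation is bounded by periodicity) with $h(E)=E'=\bigcup_{n}(\mathbb{Z}+b_n)\subset\mathbb{R}$. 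Writing $E'=\{e_\ell\}_{\ell\in\mathbb{Z}}$ in increasing order, its consecutive gaps take only the finitely many values $b_{n+1}-b_n$ and $b_1+1-b_m$ (after ordering $b_1<\dots<b_m$), all lying in a fixed interval $[g_{\min},g_{\max}]\subset(0,\infty)$; hence $e_{\ell+k}-e_\ell$ and $e_\ell-e_{\ell-k}$ are each pinched between $kg_{\min}$ and $kg_{\max}$, so condition (3) of Theorem \ref{thmB} holds with $M=g_{\max}/g_{\min}$. Theorem \ref{thmB} then gives $E'\sim\mathbb{Z}$, and composing with $h$ gives $E\sim\mathbb{Z}$. (Alternatively one bypasses Theorem \ref{thmB} and maps the width-one strip with its $m$ marked points quasiconformally onto the width-$m$ strip $\{0\le\mathrm{Re}\,z\le m\}$ sending $a_n\mapsto n-1$, extending by $F(z+1)=F(z)+m$.)

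For the converse I argue by contradiction: suppose $f$ is a quasiconformal self-homeomorphism of $\mathbb{C}$ with $f(E)=\mathbb{Z}$. Because $E$ and $\mathbb{Z}$ are closed and discrete, both accumulate in $\hat{\mathbb{C}}$ only at $\infty$, so $f$ extends to a quasiconformal homeomorphism of $\hat{\mathbb{C}}$ fixing $\infty$. Since $\mathbb{Z}\cup\{\infty\}$ lies on the round circle $\hat{\mathbb{R}}$, its preimage $Q:=f^{-1}(\hat{\mathbb{R}})$ is a quasicircle passing through $E\cup\{\infty\}$; thus a necessary condition for $E\sim\mathbb{Z}$ is that all of $E\cup\{\infty\}$ can be threaded by a single bounded-turning Jordan curve. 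The heart of the proof is to show this is impossible once there are infinitely many rows. Here I exploit that $E$ is invariant under $z\mapsto z+1$: each row $\mathbb{Z}+a_n$ is a bi-infinite, unit-spaced sequence, so consecutive same-row points are joined on $Q$ by arcs of diameter at most the bounded-turning constant $C$, and after passing to a lacunary subsequence of rows these arcs are confined to pairwise disjoint horizontal bands whose heights tend to $\infty$. Quantifying how $Q$, a simple arc once $\infty$ is removed, must route among infinitely many such bands while still returning each row's two tails to the only two available ends at $\infty$ should yield, via the extremal-distance estimates of Section~\ref{lemma} together with the three-point characterisation of quasicircles, a family of separating curves whose moduli cannot match those of any configuration lying on $\hat{\mathbb{R}}$.

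I expect this last step to be the main obstacle. The difficulty is that bounded turning is a purely metric constraint, and for rows escaping to $i\infty$ very rapidly the local picture near each puncture is indistinguishable from that of $\mathbb{Z}$; the incompatibility is genuinely global, living at the end $\infty$, so the argument must extract a divergence that is \emph{uniform} over all admissible height sequences $\{\mathrm{Im}\,a_n\}$. My plan is therefore to make the obstruction quantitative through extremal distance: to exhibit, for each infinite configuration, pairs of subsets of $E$ whose extremal distance degenerates (or blows up) along the tower of bands at a rate incompatible with the uniform, period-one geometry of $\mathbb{Z}\cup\{\infty\}$ on the line, so that no single quasiconformal constant $K$ can be preserved.
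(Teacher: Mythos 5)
Your sufficiency argument ($m<+\infty$) is correct and takes a genuinely different route from the paper: you flatten the $m$ marked points of the fundamental strip onto $\mathbb{R}$ by a $1$-periodic quasiconformal map and then feed the resulting bounded-gap sequence into condition (\textit{iii}) of Theorem \ref{thmB}, whereas the paper simply notes that $(\mathbb{C}\setminus\mathbb{Z})/\langle z+m\rangle$ and $(\mathbb{C}\setminus E)/\langle z+1\rangle$ are both $(m+2)$-punctured spheres, takes a quasiconformal map between them fixing the two punctures coming from the ends, and lifts it. Your version is more hands-on but leans on Theorem \ref{thmB}; your parenthetical alternative (stretching the width-one strip to a width-$m$ strip and extending by $F(z+1)=F(z)+m$) is essentially the paper's lift written out explicitly. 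Either is acceptable.

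The necessity direction, however, contains a genuine gap, and you have located it yourself: everything after ``the heart of the proof is to show this is impossible'' is a programme rather than an argument, and the step you call the main obstacle is exactly the step that is missing. Two concrete problems. First, you weaken the hypothesis before attacking it: you retain only the fact that $E\cup\{\infty\}$ lies on a quasicircle and discard the map $f$ itself, and it is not clear that the bounded-turning condition alone is violated by an infinite, rapidly escaping family of rows --- as you yourself observe, the local picture near each puncture can be indistinguishable from $\mathbb{Z}$. Second, you give no mechanism for producing the uniform quantitative contradiction. The paper supplies both missing ingredients. (a) Porosity: $\mathbb{Z}$ is porous and porosity is a quasiconformal invariant (V\"ais\"al\"a), so $E$ is $c$-porous; an empty disk of radius comparable to $r$ placed high above $\mathbb{R}$, combined with the invariance $E=E+1$, yields an entire horizontal band of height $2r$ disjoint from $E$ with points of $E$ on both sides, whence some pair of consecutive images of integers satisfies $|\mathrm{Im}f(m)-\mathrm{Im}f(m+1)|\geq 2r$ (Lemma \ref{lemma1}); this is where the translation-invariance and the infinitude of rows actually enter. (b) The contradiction is then extracted not from the curve $f(\mathbb{R})$ but from the unit segments $C_1'$, $C_2'$ joining $f(m)$ to $f(m)+1$ and $f(m+1)$ to $f(m+1)+1$: by periodicity their endpoints lie in $E$, so the preimages $C_i=f^{-1}(C_i')$ have distinct integer endpoints, giving ${\rm diam}\,C_i\geq 1$ and ${\rm dist}(C_1,C_2)\leq 1$; Vuorinen's bound then forces $\edc{C_1}{C_2}\geq \frac{2}{\pi}\log 2$, contradicting $\edc{C_1}{C_2}\leq 2\pi K/\log\ell$ once $\ell$ is large. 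Without something playing the role of these two steps, your second half does not close.
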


The assumption for $E$ in Theorem \ref{thmA} means that $\mathbb{C}\setminus E$ has an automorphism
of infinite order $z+1$. On the other hand, if $\mathbb{C}\setminus E$ has an automorphism of infinite
order, we may assume $E$ satisfies the assumption in Theorem \ref{thmA} by composing 
certain Affine transformation. Thus we immediately obtain the following application;\vspace{1ex}

Let $T_0=\bigcup_{n\in \mathbb{N}} p_n^{\ast} \left( T(R_n)\right)$, namely $T_0$ is a subspace
of $T(\mathbb{C}\setminus\mathbb{Z})$ which simultaneously describes all quasiconformal deformations of all Riemann
surfaces of finite type $(0,n)$ with $n\geq 3$. Further, let $T_{\infty}$ be the set of all $[S,f]
\in T(\mathbb{C}\setminus\mathbb{Z})$, the \tei equivalence class of 
the quasiconformal homeomorphism $f:\mathbb{C}\setminus\mathbb{Z} \rightarrow S$, such that there exists 
an automorphism of infinite order in ${\rm Aut}(S)$. Then Theorem \ref{thmA} implies
\begin{coroa} \label{coro1}
\hspace{5ex} $\displaystyle T_{\infty}=\bigcup_{[f]\in {\rm Mod}(\mathbb{C}\setminus\mathbb{Z})} [f]_{\ast}\left(T_0\right)$.
\end{coroa}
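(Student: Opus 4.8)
The plan is to prove the two inclusions separately, extracting the substantive one from Theorem \ref{thmA}.

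For the inclusion $\supseteq$, I would first note that membership in $T_{\infty}$ is a property of the target surface alone: if $[S,f]\in T_{\infty}$ and $[g]\in {\rm Mod}(\mathbb{C}\setminus\mathbb{Z})$, then $[g]_{\ast}[S,f]$ is represented by the same $S$ with a new marking, and $\aut(S)$ is unchanged, so $[g]_{\ast}[S,f]\in T_{\infty}$. Hence $T_{\infty}$ is ${\rm Mod}(\mathbb{C}\setminus\mathbb{Z})$-invariant, and it suffices to check $T_0\subseteq T_{\infty}$. A point of $p_n^{\ast}(T(R_n))$ is represented by a marking $f$ whose Beltrami coefficient is invariant under $z\mapsto z+n$, equivalently $\sigma:=f\circ(z+n)\circ f^{-1}\in\aut(S)$ is conformal; since $z\mapsto z+n$ has infinite order and $f$ is a homeomorphism, $\sigma$ has infinite order, so $[S,f]\in T_{\infty}$. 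Therefore $[g]_{\ast}(T_0)\subseteq T_{\infty}$ for every $[g]$, which gives $\supseteq$.

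For the inclusion $\subseteq$, take $[S,f]\in T_{\infty}$ together with $\sigma\in\aut(S)$ of infinite order. As recalled in the introduction, $S$ is conformally $\mathbb{C}\setminus E$ for some $E\in\mathscr{P}$, and by the removable singularity theorem $\sigma$ extends to a M\"obius transformation preserving $E\cup\{\infty\}$. As $\infty$ is the unique accumulation point of $E\cup\{\infty\}$ it must be fixed, so $\sigma$ is affine, $\sigma(z)=az+b$. A short discreteness argument excludes every case but a translation: if $|a|\neq 1$ the fixed point in $\mathbb{C}$ attracts a forward or backward orbit of $E$, and if $|a|=1$ with $a\neq 1$ of infinite order the orbit of any point is dense on a circle; both contradict discreteness of $E$. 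Hence $a=1$ and $\sigma$ is a nontrivial translation. Taking $\sigma$ to generate the (cyclic) group of period translations and composing with an affine map, we may assume $\sigma(z)=z+1$, so $E$ is invariant under $z\mapsto z+1$ and $E=\mathbb{Z}+\{a_n\}_{n=1}^{m}$ with ${\rm Re}(a_n)\in[0,1)$. Since $[S,f]\in T(\mathbb{C}\setminus\mathbb{Z})$, the set $E$ is quasiconformally equivalent to $\mathbb{Z}$, so Theorem \ref{thmA} forces $m<+\infty$.

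It remains to realize $[S,f]$ in the modular orbit of $T_0$. Set $\tau:=f^{-1}\circ\sigma\circ f$, a quasiconformal self-map of $\mathbb{C}\setminus\mathbb{Z}$ of infinite order. Because $\sigma$ is conformal, $[S,f]$ is a fixed point of the class $[\tau]$ acting on $T(\mathbb{C}\setminus\mathbb{Z})$, and for a class represented by a covering transformation the fixed locus is exactly the corresponding periodic subspace; in particular ${\rm Fix}([z\mapsto z+m])=p_m^{\ast}(T(R_m))$. The quotient $S/\langle\sigma\rangle$ is a sphere with $m+2$ punctures, matching $R_m=(\mathbb{C}\setminus\mathbb{Z})/\langle z+m\rangle$. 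Using $m<+\infty$ one constructs a quasiconformal homeomorphism $g$ of $\mathbb{C}\setminus\mathbb{Z}$ conjugating $z\mapsto z+m$ to $\tau$, so that ${\rm Fix}([\tau])=[g]_{\ast}^{-1}\,p_m^{\ast}(T(R_m))$, whence $[S,f]\in[g]_{\ast}^{-1}(T_0)$.

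The main obstacle is precisely the construction of $g$ in the last step: one must upgrade the evident topological equivalence between the two infinite cyclic covers $\mathbb{C}\setminus\mathbb{Z}\to R_m$ and $S\to S/\langle\sigma\rangle$ to a \emph{quasiconformal}, translation-equivariant homeomorphism of $\mathbb{C}\setminus\mathbb{Z}$. This is where finiteness is indispensable, and it is exactly the content produced by the forward direction of Theorem \ref{thmA}: the same equivariant construction yields a quasiconformal map carrying the one-period configuration $\{a_1,\dots,a_m\}$ to $\{0,1,\dots,m-1\}$ compatibly with the two translations, and extending it by equivariance furnishes $g$.
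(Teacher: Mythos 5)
Your proof is correct and follows essentially the same route as the paper, which obtains the corollary from Theorem \ref{thmA} together with the Remark that every surface with an infinite-order automorphism that is quasiconformally equivalent to $\mathbb{C}\setminus\mathbb{Z}$ admits a periodic quasiconformal marking lifted from a finitely punctured sphere; you merely supply the details the paper leaves implicit (that an infinite-order automorphism of $\mathbb{C}\setminus E$ must be a translation, and the equivariant lift coming from the sufficiency part of Theorem \ref{thmA}). One small remark: the appeal to ${\rm Fix}([z\mapsto z+m])=p_m^{\ast}(T(R_m))$ is stronger than needed and its nontrivial inclusion is not justified, but it is also redundant --- once $g$ conjugates $z\mapsto z+m$ to $\tau$, the marking $f\circ g$ satisfies $(f\circ g)\circ(z+m)=\sigma\circ(f\circ g)$ with $\sigma$ conformal, so its Beltrami coefficient is $(z+m)$-periodic and $[S,f\circ g]\in p_m^{\ast}(T(R_m))\subset T_0$ directly.
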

Here, ${\rm Mod}(\mathbb{C}\setminus\mathbb{Z})$ is the Teichm\"uller-Modular group of $\mathbb{C}\setminus \mathbb{Z}$.
The subspace $T_0$ is not closed in $T(\mathbb{C}\setminus\mathbb{Z})$. However, it is easily seen that $T_0$ is separable
by its construction.
Further from the McMullen theorem, $T_0$ is geodesically convex with respect to the \tei metrics.

In addition, ${\rm Aut}(S)$ is isomorphic to the stabilizer ${\rm Stab}_{{\rm Mod}(\mathbb{C}\setminus\mathbb{Z})}([S,f])$ 
for $[S,f]\in T(\mathbb{C}\setminus\mathbb{Z})$.
Therefore, the Teichm\"uller-Modular group ${\rm Mod}(\mathbb{C}\setminus\mathbb{Z})$ does not
act properly discontinuously at each point of $\overline{T_{\infty}}$, the closure of $T_{\infty}$.\vspace{1ex}

Note that we can apply the above argument to the another infinite type Riemann surface 
$R'=\mathbb{C}^{\ast}\setminus \{2^n\}_{n\in \mathbb{Z}}$, where $\mathbb{C}^{\ast}=\mathbb{C}\setminus\{0\}$. 
We will discuss this case in detail in Section \ref{another}.\\

\setcounter{theo}{0}

\section{Preliminaries}

\subsection{Porous sets}

We say that 
a subset $E\subset \mathbb{C}$ is $c$-porous in $\mathbb{C}$ for a constant $c\geq 1$ if
any closed disk $\overline{B}_r(z')$ of radius $r>0$ centered at $z'\in \mathbb{C}$ contains $z$ such that
$B_{r/c}(z)\subset \mathbb{C}\setminus E$.\vspace{1ex}

It is easily seen that;
\begin{itemize}\setlength{\leftskip}{1ex}
\item $\mathbb{Z}+i\mathbb{Z}$ is not porous in $\mathbb{C}$.
\item Any subset of $\mathbb{R}$ is $1$-porous in $\mathbb{C}$, particularly, $\mathbb{Z}$ is $1$-porous in $\mathbb{C}$.
\item $E_1=\mathbb{Z}+i\left\{ 2^n \mid n=0,1,2,\cdots \right\}$ is $8$-porous.
\end{itemize}
\vspace{1ex}

J. V\"ais\"al\"a pointed out that the porosity is preserved by quasiconformal mappings in \cite{vaisala3}. Thus 
it immediately follows that $\mathbb{Z}+i\mathbb{Z}$ is not quasiconformally equivalent to
$\mathbb{Z}$. However, by this way, we cannot decide whether $E_1$ is quasiconformally
equivalent to $\mathbb{Z}$ or not. (Theorem \ref{thmA} proved in Section \ref{sss} shows that $E_1$
is not quasiconformally equivalent to $\mathbb{Z}$.)

\subsection{Quasiconformal mappings and Extremal distances}

Let $D\subset \hat{\mathbb{C}}$ be a domain. For given continua $C_1, C_2\subset D$,
\[
\edd{C_1}{C_2}=\mode(\zokud{C_1}{C_2})
\]
is called the extremal distance between $C_1$ and $C_2$ in $D$, where $\mode$
denotes the $2$-modulus of a curve family and $\zokud{C_1}{C_2}$ denotes the family
of all rectifiable curves which join $C_1$ and $C_2$ in $D$. The definition of $2$-modulus is given by
\[
{\rm mod}(\mathscr{F}) := \inf_{\rho} \int _{\mathbb{C}} \rho(x+iy)^2 dxdy.
\]
where the infimum is taken over all non-negative Borel functions with $\displaystyle \int_{\gamma} \rho |dz|\geq 1$ for all
rectifiable $\gamma \in \mathscr{F}$.

The $2$-modulus coincides with
the reciprocal of the extremal length introduced by L.\ V.\ Ahlfors and A.\ Beurling \cite{beurling2}.
It is well known that a sense preserving homeomorphism $f$ becomes $K$-quasiconformal for a constant $K\geq1$ if
and only if $f$ satisfies the following inequality for any curve family $\mathscr{F}$ in the domain of $f$.
\[
\frac{1}{K}\mode(\mathscr{F})\leq \mode(f(\mathscr{F})) \leq K\mode(\mathscr{F}).
\]

 The next useful lower bound for extremal distances was presented by M.\ Vuorinen in
 \cite[Lemma\ 4.7]{vuorinen3}; For each pair of disjoint continua $C_1, C_2\subset \mathbb{C}$,
 it holds that
\[
\edc{C_1}{C_2} \geq \frac{2}{\pi} \log \left( 1+ \frac{\min_{i=1,2}{\rm diam}(C_i)}{{\rm dist}(C_1,C_2)} \right).
\]
    
\subsection{The Ahlfors three-point condition}

The image of $\dot{\mathbb{R}}=\mathbb{R}\cup \{\infty\}$ under a quasiconformal self-homeomorphism
of the Riemann sphere is called a quasicircle.

A characterization of quasicircles was obtained in \cite{ahlfors2}; For a Jordan curve $C$ in
the Riemann sphere which passes through $\infty$, $C$ is a quasicircle if and only if
there exists $A\geq 1$ such that whenever three distinct points $z_1,z_2,z_3\in C\setminus \{\infty\}$   
lie on $C$ in this order, the following inequality holds.
\[
\frac{|z_1-z_2|}{|z_1-z_3|}\leq A.
\]
This necessary and sufficient condition is called the \textit{three-point condition} 
(or the \textit{bounded turning condition}). 
The necessity of the three-point condition means that if a quasicircle goes far away from a certain point,
it cannot return near this point above a certain rate.
A similar characterization theorem also holds for a Jordan curve which does not pass through $\infty$,
however we will only deal with the former case in this paper.

\subsection{The Ahlfors--Beurling extension theorem}
An orientation preserving self-homeomorphism $\phi$ of $\mathbb{R}$ is called $M$-quasisymmetric for $M\geq 1$
if the following inequality holds for all $x\in \mathbb{R}$ and all $t>0$.
\[
\frac{1}{M}\leq \frac{\phi(x+t)-\phi(x)}{\phi(x)-\phi(x-t)} \leq M.
\]
We merely say $\phi$ is quasisymmetric if $\phi$ is $M$-quasisymmetric for some $M\geq 1$.

The concept of $M$-quasisymmetry gives a characterization of orientation preserving self-homeomorphisms of real line 
which have a global quasiconformal extension, that is; For a given orientation preserving self-homeomorphism $\phi$
of $\mathbb{R}$, $\phi$ can be extended to a quasiconformal homeomorphism from the upper half plane onto itself if and only if
$\phi$ is quasisymmetric (cf. The Ahlfors--Beurling extension theorem \cite{ahlfors2}).
Moreover it is well known that every quasiconformal self-homeomorphism of the upper half plane is the restriction of a global
quasiconformal homeomorphism. Namely, $\phi$ can be extended to a quasiconformal self-homeomorphism of the Riemann sphere,
if and only if $\phi$ is quasisymmetric.

\section{Proof of Theorem \ref{thmB}}

In this section, we would like to restrict ourselves to $E\in\mathscr{P}$ which lies on $\mathbb{R}$.

\subsection{A criterion} \label{lemma}
We obtain the next criterion for $E\in \mathscr{P}$ contained in $\mathbb{R}$ to be quasiconformally
equivalent to $\mathbb{Z}$.

\begin{thm} \label{lem}
Let $E\in\mathscr{P}$ be contained in $\mathbb{R}$.
If $E$ is quasiconformally equivalent to $\mathbb{Z}$, then $\sup E=+\infty$ and $\inf E=-\infty$.
\end{thm}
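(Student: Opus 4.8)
The plan is to argue by contradiction, treating the two assertions $\sup E=+\infty$ and $\inf E=-\infty$ symmetrically; I describe the case $\sup E<+\infty$ and note the other is identical after replacing $E$ by $-E$ (which is bounded above and still quasiconformally equivalent to $\mathbb{Z}=-\mathbb{Z}$). So suppose $E$ is bounded above, and let $f$ be a quasiconformal self-homeomorphism of $\mathbb{C}$ with $f(E)=\mathbb{Z}$, extended to $\hat{\mathbb{C}}$ so that $f(\infty)=\infty$. The object I would exploit is the quasicircle $C=f(\hat{\mathbb{R}})$, which passes through $\infty$; let $A\ge 1$ be a constant furnished by the three-point condition for $C$. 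Since $E\subset\mathbb{R}$, all of $\mathbb{Z}=f(E)$ lies on $C$, and because $f|_{\hat{\mathbb{R}}}$ is a homeomorphism of circles it carries the linear order of $E$ (inherited from $\mathbb{R}$) to the order in which the corresponding integers are met as one traverses $C$.

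The first key step is to read off the order type. As $E$ is closed, discrete, infinite and bounded above, I would enumerate it as $E=\{a_n\}_{n\le 0}$ with $a_0=\max E$ and $a_n\downarrow-\infty$. Writing $c_k=f(a_{-k})$ for $k\ge 0$ (reversing the enumeration if $f|_{\hat{\mathbb{R}}}$ reverses cyclic order) gives a \emph{bijection} $c\colon\mathbb{N}_0\to\mathbb{Z}$, i.e. an enumeration of all the integers, in which $c_k$ appears before $c_{k'}$ along $C$ whenever $k<k'$. After a harmless integer translation I may assume $c_0=0$. Applying the three-point inequality to any three integers taken in their order along $C$ then yields the purely combinatorial constraint
\[
|c_{k_1}-c_{k_2}|\le A\,|c_{k_1}-c_{k_3}|\qquad\text{for all } k_1<k_2<k_3 .
\]

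The heart of the argument is to show that \emph{no} enumeration of $\mathbb{Z}$ can satisfy this inequality, which then gives the desired contradiction. Here I would use both the surjectivity of $c$ and the uniform spacing of $\mathbb{Z}$ as follows. Only finitely many integers are placed at indices below any fixed index, so for every sufficiently large $W$ the value $-W$ occurs at an index $k_2$ lying beyond the index of $1$; fix one with $W>A$. Let $u\ge 1$ be the largest positive integer occurring at an index $k_1<k_2$ (such a value exists because $1$ does), and observe that $u+1$ cannot occur at an index $\le k_2$, hence occurs at some $k_3>k_2$. The three values $u,\,-W,\,u+1$ are distinct, and applying the displayed inequality to the triple $k_1<k_2<k_3$, for which $|c_{k_1}-c_{k_3}|=|u-(u+1)|=1$, forces $W+u\le A$, contradicting $W>A$. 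This is exactly where the two features of $\mathbb{Z}$ enter: two consecutive integers at distance $1$ whose positions along $C$ straddle a value that has been dragged far away.

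The step I expect to be the main obstacle is not the combinatorial endgame but the realization that local, three-point data alone must be combined with the global order structure: metric quasisymmetry applied to any fixed finite set of points is satisfiable even for bounded-above configurations, and porosity does not distinguish $E$ from $\mathbb{Z}$ since both lie on a line. The decisive idea is that boundedness above forces $\mathbb{Z}=f(E)$ to be enumerated along $C$ with order type $\mathbb{N}_0$ rather than that of $\mathbb{Z}$, and that this folding of an evenly spaced set is incompatible with bounded turning. Once the problem is reduced to the displayed enumeration inequality, the straddling triple above closes the argument, and the case $\inf E=-\infty$ follows by applying the same reasoning to $-E$.
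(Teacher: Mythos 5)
Your proof is correct and follows essentially the same route as the paper: both arguments reduce to the Ahlfors three-point condition for the quasicircle $f(\mathbb{R})$ and contradict it with a ``straddling'' triple consisting of a running maximum among the images, a far-negative integer occurring later in the induced one-sided enumeration of $\mathbb{Z}$, and the successor of that maximum occurring later still. The only difference is bookkeeping (the paper fixes the running maximum $\geq M$ and lets the dip be merely negative, while you fix the dip at $-W$ with $W>A$ and take the maximum $\geq 1$), which does not change the substance.
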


\begin{proof}
To obtain a contradiction, assume $\inf E>-\infty$. Then since $E$ is discrete and closed, $\sup E=+\infty$.
Thus numbering $E$ suitably we let $E=\{a_n\}_{n\in \mathbb{N}}$ be a monotone increasing sequence with
$a_n \rightarrow +\infty$ as $n\rightarrow +\infty$.\vspace{1ex}

Let $f:\mathbb{C} \rightarrow \mathbb{C}$ be $K$-quasiconformal mapping with $f(E)=\mathbb{Z}$. Composing
an Affine transformation, we may assume $f(a_1)=0$. For an arbitrary fixed constant $M\geq 1$, consider the set
\[
S:=\left\{\ k\in \mathbb{N}\ \left|\ f(a_k)=\max_{j=1,\cdots,k} f(a_j) \geq M\  \right.\right\}.
\]
Obviously, $S$ consist of infinitely many elements and $\sup S=+\infty$ since $f|_E:E\rightarrow \mathbb{Z}$
is bijective. Therefore it is easily seen that there exists $k\in I$ and exist $\ell,m \in \mathbb{N}$ for $k$ such that
\begin{itemize}\setlength{\leftskip}{8ex}
\item\ \  $k<\ell<m$,
\item\ \  $f(a_{\ell})<0$, and \  $f(a_m)=f(a_k)+1$.
\end{itemize}
\vspace{-2ex}
\begin{figure}[h]
       \begin{center}
           \includegraphics[width=12.5cm]{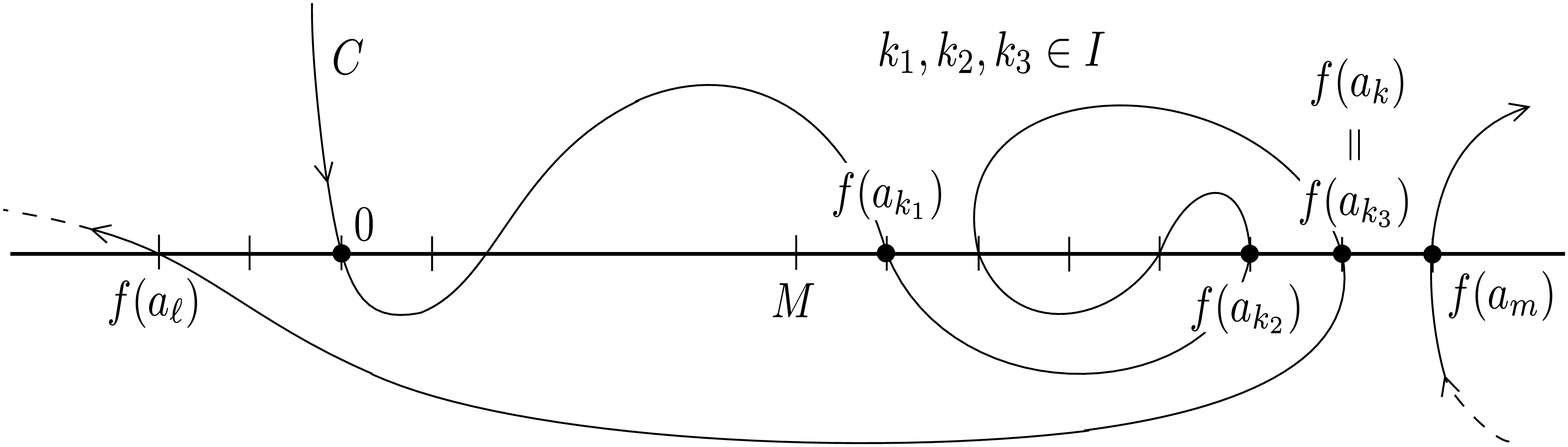} 
       \end{center}
       \caption{}  
\end{figure}

\ \vspace{-2ex}\\
Then $f(a_k),f(a_{\ell}),f(a_m)$ lie on $C:=f(\mathbb{R})$ in this order, however,
\begin{eqnarray*}
\frac{|f(a_k)-f(a_{\ell})|}{|f(a_k)-f(a_m)|} =  f(a_k)-f(a_{\ell}) \geq M.
\end{eqnarray*}
\ \\
This means $C$ can not satisfy the three-point condition for any $M\geq1$. However this contradicts that
$C$ is a subarc of a quasicircle which passes through $\infty$.

\end{proof}

This result extremely depends on the particularity of $\mathbb{Z}$. 

\subsection*{Example}
For arbitrary $r,s>1$, $\{r^n\}_{n=0}^{\infty}$ is quasiconformally equivalent to $\{\pm s^n\}_{n=0}^{\infty}$.
     
\subsection{A characterization}
From Theorem \ref{lem}, we only need for our Problem to consider the case that 
$E=\{a_n\}_{n\in \mathbb{Z}} \subset \mathbb{R}$ is a monotone increasing sequence
with $a_n \rightarrow \pm \infty$ as $n\rightarrow \pm \infty$.

\begin{thma}
For a monotone increasing sequence $E=\{a_n\}_{n\in \mathbb{Z}} \subset \mathbb{R}$ 
with $a_n \rightarrow \pm \infty$ as $n\rightarrow \pm \infty$, the following conditions are equivalent.
\begin{enumerate}\setlength{\leftskip}{4ex}
\item $E$ is quasiconformally equivalent to $\mathbb{Z}$.
\item There exists a quasiconformal homeomorphism of $\mathbb{C}$ such that $f(n)=a_n$ for all $n\in \mathbb{Z}$.
\item There exists $M\geq 1$ such that the following inequality holds for all $n\in \mathbb{Z},\ k\in \mathbb{N}$;
\[
\frac{1}{M} \leq \frac{a_{n+k}-a_n}{a_n-a_{n-k}} \leq M.
\]
\end{enumerate}
\end{thma}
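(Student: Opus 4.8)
The plan is to prove the equivalences by the cycle $(2)\Rightarrow(1)\Rightarrow(3)\Rightarrow(2)$. The step $(2)\Rightarrow(1)$ is immediate from the definitions: if $f$ is a quasiconformal self-homeomorphism of $\mathbb{C}$ with $f(n)=a_n$ for every $n\in\mathbb{Z}$, then $f(\mathbb{Z})=\{a_n\}_{n\in\mathbb{Z}}=E$, so $f$ itself witnesses that $E$ is quasiconformally equivalent to $\mathbb{Z}$.

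For $(3)\Rightarrow(2)$ I would build the map by extending a boundary homeomorphism. Let $\phi\colon\mathbb{R}\to\mathbb{R}$ be affine on each interval $[n,n+1]$ with $\phi(n)=a_n$; since $\{a_n\}$ is strictly increasing and tends to $\pm\infty$, $\phi$ is an orientation-preserving homeomorphism of $\mathbb{R}$. The heart of this direction is to show that (3) forces $\phi$ to be quasisymmetric. Writing $d_n=a_{n+1}-a_n$, the case $k=1$ of (3) reads $M^{-1}\le d_n/d_{n-1}\le M$, i.e.\ consecutive gaps are comparable; this immediately settles all increments $t<1$, for which $\phi(x+t)-\phi(x)$ and $\phi(x)-\phi(x-t)$ are built from at most three adjacent, hence comparable, slopes. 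For $t\ge 1$ one compares the two differences with the sums of the roughly $t$ gaps lying just to the right and just to the left of $x$, invokes (3) with $k=\lfloor t\rfloor$, and absorbs the fractional end contributions and the $\pm1$ ambiguity in the number of enclosed integers using the adjacent-gap comparability; this yields an $M'=M'(M)$ with $\phi$ being $M'$-quasisymmetric. The Ahlfors--Beurling extension theorem then extends $\phi$ to a quasiconformal self-homeomorphism $f$ of the sphere fixing $\infty$, with $f|_{\mathbb{R}}=\phi$, so that $f(n)=a_n$, which is (2).

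The implication $(1)\Rightarrow(3)$ is where the genuine difficulty lies. Start from a $K$-quasiconformal $F\colon\mathbb{C}\to\mathbb{C}$ with $F(\mathbb{Z})=E$. Such an $F$ is quasisymmetric in the metric sense, so for all distinct integers the ratio $|F(i)-F(j)|/|F(i)-F(\ell)|$ is controlled by a homeomorphism $\eta=\eta_K$ applied to $|i-j|/|i-\ell|$; equivalently, one may extract two-sided distance control from the quasi-invariance of extremal distance together with Vuorinen's lower bound recalled above. Testing this on the equally spaced triples $\{j-k,\,j,\,j+k\}$ controls the ratio of the two Euclidean gaps of their $F$-images. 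The serious obstacle is that $F$ produces the points of $E$ in the enumeration inherited from $F|_{\mathbb{R}}$, i.e.\ from the order along the quasicircle $C=F(\mathbb{R})$, and this \emph{need not} be the monotone real enumeration $\{a_n\}$ occurring in (3): a quasiconformal map may interchange nearby integers. I would dispose of this by proving that the order along $C$ and the monotone order of $E$ on $\mathbb{R}$ differ only by a permutation of bounded displacement $N=N(K)$, the point being that the two-sided quasisymmetry estimate allows only bounded ``overshoot'' when a subset of $\mathbb{R}$ is mapped quasisymmetrically into $\mathbb{R}$, so no large order inversion can occur. Granting this, $F(j)=a_{j+c+O(1)}$, and after the conformal normalization $z\mapsto z-c$ the triple estimate for $\{j-k,j,j+k\}$ transfers to the desired two-sided bound for $(a_{n+k}-a_n)/(a_n-a_{n-k})$, the bounded index discrepancy being swallowed by the comparability of adjacent gaps (itself a consequence of the same estimate applied to nearly consecutive integers). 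Establishing this bounded order distortion, and tracking the way it interacts with the scale-$k$ estimate, is the step I expect to require the most care.

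Finally, it is worth recording that $(2)\Rightarrow(3)$ can be seen directly and transparently, bypassing the ordering issue: the map furnished by (2) already satisfies $f(n)=a_n$ with the $a_n$ in their correct order on $\mathbb{R}$, so applying the metric quasisymmetry of $f$ to the triple $(n-k,n,n+k)$ gives $(a_{n+k}-a_n)/(a_n-a_{n-k})\le\eta_K(1)$ and, by symmetry, the matching lower bound. This makes plain that condition (3) is nothing but quasisymmetry tested at integer scale, which is the conceptual reason the three conditions coincide.
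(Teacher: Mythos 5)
Your architecture ((2)$\Rightarrow$(1) trivial, (3)$\Rightarrow$(2) via the piecewise-affine interpolation and Ahlfors--Beurling, (1)$\Rightarrow$(3) via metric estimates for the quasiconformal map) matches the paper's, and your (3)$\Rightarrow$(2) is essentially identical to the paper's proof, which carries out the case analysis you describe and lands on the explicit quasisymmetry constant $C(M)=M^4+M^3+M^2+M$. For (1)$\Rightarrow$(3) you also correctly isolate the real obstacle: the quasiconformal equivalence enumerates $E$ in the order induced by the map, which need not be the monotone order of $E$ on $\mathbb{R}$. Your tool for transferring the gap ratios (the $\eta$-quasisymmetry of global $K$-quasiconformal maps tested on the triples $\{j-k,j,j+k\}$) is a legitimate and arguably cleaner substitute for the paper's device, which normalizes the map in the other direction ($f(E)=\mathbb{Z}$), compares the extremal distance between the concentric circles $S^1(a_n,|a_{n+k}-a_n|)$ and $S^1(a_n,|a_n-a_{n-k}|)$ with its image, and bounds the image side from below by Vuorinen's inequality.

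The genuine gap is the ``bounded order distortion'' claim, which you assert in one sentence and defer (``the step I expect to require the most care''). This is not a technicality to be absorbed: it is the substantive content of the implication, and the paper devotes Proposition \ref{main} together with Lemmas \ref{lem2} and \ref{lem3} to it. Moreover, the justification you offer --- that quasisymmetry of a map of a subset of $\mathbb{R}$ into $\mathbb{R}$ forbids large order inversions --- is not directly applicable, because $F$ does not map $\mathbb{R}$ into $\mathbb{R}$; only the discrete set $\mathbb{Z}$ lands on the line, so there is no order on the source that quasisymmetry alone compares with the order of $E$. The missing ingredient is the Ahlfors three-point condition applied to the quasicircle through the marked points (in the paper, $C=f(\mathbb{R})$, along which the integers $f(a_n)$ appear in the order of $n$): the three-point condition caps each consecutive jump, $|f(a_n)-f(a_{n+1})|\leq 2A$ (Lemma \ref{lem2}), and a pigeonhole count of the integers in an interval of length $k-1$ gives the matching lower bound $|f(a_n)-f(a_{n+k})|\geq (k-1)/(2A)$ (Lemma \ref{lem3}); together these show that $n\mapsto f(a_n)$ is a rough isometry of $\mathbb{Z}$, which is exactly the bounded displacement you need. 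Until you supply an argument of this kind (the three-point condition for $F(\mathbb{R})$, which contains $E$, would serve symmetrically in your normalization), the implication (1)$\Rightarrow$(3) is not proved. A second, smaller point: your final chaining step uses ``comparability of adjacent gaps'' of $E$ to absorb the $O(1)$ index error, but that comparability is itself an instance of what is being proved, so you must derive it first from the rough-isometry statement rather than invoke it circularly.
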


That (\textit{ii}) implies (\textit{i}) is trivial. We prove the other implications below.

\subsubsection*{Proof ((iii) $\Rightarrow$ (ii))}
Set $\phi (x):=(a_{n+1}-a_n)(x-n)+a_n$ for $x\in [n,n+1)$.
Then $\phi$ defines an orientation preserving self-homeomorphism of $\mathbb{R}$ with $\phi(n)=a_n$, further 
becomes $C(M)$-quasisymmetric where $C(M)=M^4+M^3+M^2+M$. Therefore we obtain a quasicnoformal extension 
$f:\mathbb{C}\rightarrow\mathbb{C}$ of $\phi$ by the Ahlfors--Beurling extension theorem.\vspace{1ex}

Let $x=n+t_1,\ t=m+t_2\ (n\in \mathbb{Z},\ m\in \mathbb{Z}_{\geq 0},\ t_1,t_2 \in [0,1))$.
To prove the quasisymmetry of $\phi$, we have to show the following inequality,
\[
\frac{1}{C(M)} \leq I:=\frac{\phi (x+t)-\phi (x)}{\phi (x)-\phi (x-t)} \leq C(M).
\]
We divide the calculations into the following four cases.

\begin{enumerate}\setlength{\leftskip}{10ex}
\item $t_1+t_2 \in [0,1)$ and $t_1 -t_2 \in (-1,0)$.
\item $t_1+t_2 \in [0,1)$ and $t_1 -t_2 \in [0,1)$.
\item $t_1+t_2 \in [1,2)$ and $t_1 -t_2 \in (-1,0)$.
\item $t_1+t_2 \in [1,2)$ and $t_1 -t_2 \in [0,1)$.
\end{enumerate}
However we only check the first case here as the calculations are almost the same and easy for each case.
To simplify the calculation, we use the next inequality. For $n,m\in \mathbb{Z}\ (n<m)$, and $k\in \mathbb{Z}_{\geq 0}$,
\begin{eqnarray*}
\frac{a_{m+k}-a_n}{a_m-a_n}&\leq & \frac{a_{m+k}-a_{m-1}}{a_m-a_{m-1}}\\
                          &=&\sum_{j=0}^{k} \frac{a_{m+j}-a_{m+j-1}}{a_m-a_{m-1}}\leq M^k+M^{k-1}+\cdots +M+1.
\end{eqnarray*}

Suppose $t_1+t_2 \in [0,1)$ and $t_1 -t_2 \in (-1,0)$. Then
\begin{eqnarray*}
I=\frac{(a_{n+m+1}-a_{n+m})(t_1+t_2)+a_{n+m}-(a_{n+1}-a_n)t_1-a_n}{(a_{n+1}-a_n)t_1+a_n- (a_{n-m}-a_{n-m-1})(1+t_1-t_2)-a_{n-m-1}}.
\end{eqnarray*}
\ \\
(\textit{Upper bound}). First if $m\neq 0$, then we have 
\begin{eqnarray*}
I&< & \frac{(a_{n+m+1}-a_{n+m})+a_{n+m}-a_n}{a_n-(a_{n-m}-a_{n-m-1})-a_{n-m-1}}\\
    &=& \frac{a_{n+m+1}-a_n}{a_n-a_{n-m}} \leq M\frac{a_{n+m+1}-a_n}{a_{n+m}-a_n} \leq M(M+1) <C(M).
\end{eqnarray*}
\ \\
Next if $m=0$, we have
\begin{eqnarray*}
I &= & \frac{(a_{n+1}-a_n)t_2}{(a_{n+1}-a_n)t_1+(a_n-a_{n-1})(t_2-t_1)}\\
  &\leq & M\frac{(a_{n+1}-a_n)t_2}{(a_{n+1}-a_n)t_1+(a_{n+1}-a_n)(t_2-t_1)}=M<C(M).
\end{eqnarray*}
\ \\
(\textit{Lower bound}). First if $m\neq 0,1$, then we have 
\begin{eqnarray*}
I &> & \frac{a_{n+m}-(a_{n+1}-a_n)-a_n}{(a_{n+1}-a_n)+a_n-a_{n-m-1}}\\
 & \geq & \frac{1}{M} \frac{a_{n+m}-a_{n+1}}{a_{n+m+3}-a_{n+1}} \geq \frac{1}{M(M^3+M^2+M+1)} =\frac{1}{C(M)}.
\end{eqnarray*}
\ \\
Next if $m=0$, we have
\begin{eqnarray*}
I &=& \frac{(a_{n+1}-a_n)t_2}{(a_{n+1}-a_n)t_1+(a_n-a_{n-1})(t_2-t_1)}\\
 &\geq & \frac{1}{M} \frac{(a_{n+1}-a_n)t_2}{(a_{n+1}-a_n)t_1+(a_{n+1}-a_n)(t_2-t_1)}=\frac{1}{M} > \frac{1}{C(M)}.
\end{eqnarray*}
\ \\
Finally if $m=1$, we have
\begin{eqnarray*}
I &= & \frac{(a_{n+2}-a_{n+1})(t_1+t_2)+(a_{n+1}-a_n)(1-t_1)}{(a_{n+1}-a_n)t_1+(a_{n-1}-a_{n-2})(t_2-t_1)+(a_n-a_{n-1})}\\
  &\geq & \frac{\frac{1}{M}(a_{n+1}-a_{n})(t_1+t_2)+\frac{1}{M}(a_{n+1}-a_n)(1-t_1)}{M(a_n-a_{n-1})t_1+M(a_n-a_{n-1})(t_2-t_1)+M(a_n-a_{n-1})}\\
  &=& \frac{1}{M^2} \frac{a_{n+1}-a_n}{a_n-a_{n-1}} \geq \frac{1}{M^3} > \frac{1}{C(M)}.
\end{eqnarray*}\\

\subsubsection*{Proof ((i) $\Rightarrow$ (iii))}
Let $f:\mathbb{C}\rightarrow \mathbb{C}$ be $K$-quasiconformal homeomorphism such that
$f(E)=\mathbb{Z}\ (K\geq 1)$. \vspace{1ex}

In this proof, we shall use the following proposition.

\begin{proposition}\label{main}
For a monotone increasing sequence $E=\{a_n\}_{n\in\mathbb{Z}}\subset \mathbb{R}$ such that 
$a_n\rightarrow \pm \infty$ as $n\rightarrow \pm \infty$ and for any $K$-quasiconformal homeomorphism 
$f:\mathbb{C}\rightarrow \mathbb{C}$ which maps $E$ onto $\mathbb{Z}$,
there exists $L\geq 1$ such that the following inequality holds for all $n\in \mathbb{Z}$ and $k\in \mathbb{N}$;
\[
\frac{1}{L} \leq \frac{|f(a_{n+k})-f(a_n)|}{|f(a_n)-f(a_{n-k})|} \leq L.
\]

\end{proposition}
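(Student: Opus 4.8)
The plan is to analyse the image quasicircle $C:=f(\mathbb{R})$ directly. Extending $f$ to $\hat{\mathbb{C}}$ with $f(\infty)=\infty$, the curve $C$ is a quasicircle through $\infty$, so by the Ahlfors three-point condition there is $A=A(K)\ge 1$ controlling it; we may assume $f$ is sense-preserving (otherwise compose with $z\mapsto\bar z$, under which both $\mathbb{R}$ and $\mathbb{Z}$ are invariant). Writing $b_m:=f(a_m)$, the hypothesis $f(E)=\mathbb{Z}$ means $\mathbb{Z}\subset C$, and since $f|_{\mathbb{R}}$ is an orientation-preserving homeomorphism onto $C$ the integers occur along $C$ in the order of the index $m$; moreover, by injectivity, the only integers lying on a subarc $f([a_p,a_q])$ are $b_p,\dots,b_q$. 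The three-point condition applied to such a subarc shows it is contained in $\overline{B}_{A|b_p-b_q|}(b_p)$, so every subarc has diameter comparable to the distance between its endpoints. Finally, it suffices to prove the single-sided estimate $|b_{n+k}-b_n|\le L\,|b_n-b_{n-k}|$: applying it to the reflected data $\tilde a_m:=-a_{-m}$, $\tilde f(z):=-f(-z)$ (which again maps a sequence of the required type onto $\mathbb{Z}$) returns the reversed inequality, and the two together give the assertion.

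Write $R_+:=|b_{n+k}-b_n|$ and $R_-:=|b_n-b_{n-k}|$. For the lower bounds I would argue by counting. The forward subarc $\gamma^+:=f([a_n,a_{n+k}])$ carries the $k+1$ distinct integers $b_n,\dots,b_{n+k}$ and, by the bounded-turning remark above, lies in a disk of radius $2AR_+$. Since a disk of radius $r$ contains at most $2r+1$ integers, we obtain $k+1\le 4AR_++1$, that is $R_+\gtrsim k/A$; the same argument applied to $\gamma^-:=f([a_{n-k},a_n])$ gives $R_-\gtrsim k/A$. This is exactly the one-dimensional density of $\mathbb{Z}$ (the same feature that makes $\mathbb{Z}$ porous), and it uses nothing beyond the three-point condition.

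The main obstacle is the matching upper bound $R_+\lesssim k$; here the counting above is useless, because an arc may have large diameter while meeting very few integers, and the three-point condition alone does not bound the ratio $R_+/R_-$. The extra input must be the \emph{global} constraint $\mathbb{Z}\subset C$, exploited through the extremal distance and its quasi-invariance. The real segment joining $b_n$ to $b_{n+k}$ contains about $R_+$ integers, all of which lie on $C$; but only the $k+1$ integers $b_n,\dots,b_{n+k}$ lie on $\gamma^+$, so at least $R_+-k$ of them lie on the complementary arc of $C$ through $\infty$. Thus, if $R_+$ were much larger than $k$, the curve $C$ would be forced to re-enter the interval $[b_n,b_{n+k}]$ at least $\asymp R_+-k$ times from outside. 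I would quantify the cost of each such re-entry by Vuorinen's lower bound for the extremal distance between the returning continuum and $\gamma^+$, transport these estimates to the source by the quasi-invariance of the modulus (losing only the factor $K$), and compare the total against the modulus of the genuinely present $k$ steps; this should force $R_+-k\lesssim_K k$. Carrying out this bookkeeping rigorously—in particular choosing the disjoint continua so that Vuorinen's estimate applies and the contributions add rather than overlap—is the technical heart of the proof.

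Granting the upper bound, the two estimates combine to give $R_+\asymp_K k\asymp_K R_-$, so $1/L\le R_+/R_-\le L$ with $L=L(K)$, which is the statement of the Proposition. (In the subsequent deduction of condition (iii) this is then transferred back to the $a_m$ by applying the quasisymmetry of $f^{-1}$.)
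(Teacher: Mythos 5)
Your reduction and your lower bound are sound and essentially match the paper: the subarc $f([a_n,a_{n+k}])$ lies in a disk of radius $\asymp A\,|f(a_{n+k})-f(a_n)|$ by the three-point condition, it carries $k+1$ distinct integers, and a disk of radius $r$ meets $\mathbb{Z}$ in at most $2r+1$ points, giving $|f(a_{n+k})-f(a_n)|\gtrsim k/A$. (The paper's Lemma \ref{lem3} gets $(k-1)/2A$ by locating a single intermediate image at distance $\ge (k-1)/2$ from $f(a_n)$ and applying the three-point condition once; same idea.)

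The gap is the upper bound $|f(a_{n+k})-f(a_n)|\lesssim k$, which you correctly identify as the crux but do not prove: your extremal-distance ``re-entry counting'' is left as a sketch, and as stated it is not clear how to choose disjoint continua so that the Vuorinen estimates accumulate rather than overlap, nor why many returns of the complementary arcs into the interval $[b_n,b_{n+k}]$ would force a modulus contradiction. The paper's route here is much more elementary and entirely local: it first proves (Lemma \ref{lem2}) that $|f(a_n)-f(a_{n+1})|\le 2A$ for \emph{consecutive} indices. The argument is a pigeonhole on the preimage indices of the integers lying between $f(a_n)$ and $f(a_{n+1})$: if $|f(a_n)-f(a_{n+1})|\ge 2$, then among the integers $j$ with $f(a_n)\le j\le f(a_{n+1})$, the index of $f^{-1}(j)$ is $\le n$ for $j=f(a_n)$ and $\ge n+1$ for $j=f(a_{n+1})$, so some two integers at distance exactly $1$ are hit from opposite sides, i.e.\ there are $m\le n$ and $\ell\ge n+1$ with $f(a_m),f(a_\ell)\in[f(a_n),f(a_{n+1})]$ and $|f(a_m)-f(a_\ell)|=1$; one of $f(a_m),f(a_\ell)$ is at distance $\ge \frac{1}{2}|f(a_{n+1})-f(a_n)|$ from the nearer endpoint, and the three-point condition applied to the triple $(f(a_m),f(a_n),f(a_\ell))$ or $(f(a_m),f(a_{n+1}),f(a_\ell))$ (which lie on $C$ in index order) bounds that distance by $A\cdot|f(a_m)-f(a_\ell)|=A$. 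The triangle inequality then gives $|f(a_n)-f(a_{n+k})|\le 2Ak$, and combined with your lower bound this yields $L=8A^2$. So the missing step has a short combinatorial proof using only the three-point condition and the unit spacing of $\mathbb{Z}$; no extremal-distance machinery is needed inside the Proposition (Vuorinen's bound is used only afterwards, to pull the estimate back to the $a_n$).
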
\ \vspace{1ex}

Proposition \ref{main} is proved in the next section.\vspace{1ex}

For arbitrary fixed $n\in \mathbb{Z}$ and $k\in \mathbb{N}$,
we set $r=|a_{n+k}-a_n|/|a_n-a_{n-k}|,\ r'=|f(a_{n+k})-f(a_n)|/|f(a_n)-f(a_{n-k})|$ and
$S_1=S^1 (a_n,|a_{n+k}-a_n|),\ S_2=S^1 (a_n,|a_n-a_{n-k}|)$ where $S^1(x,R)$ denotes
the circle of radius $R$ centered at $x$.\vspace{1ex}

 If $r>1$, then by using the Vuorinen theorem, we have
\begin{eqnarray*}
\frac{2\pi}{\log r}= \edc{S_1}{S_2}&\geq &\frac{1}{K} \edc{f(S_1)}{f(S_2)}\\
       &\geq &\frac{1}{K} \frac{2}{\pi} \log \left( 1+
       \frac{\min_{i=1,2} {\rm diam} f(S_i)}{{\rm dist}\left( f(S_1), f(S_2)\right)}\right)\\
       &\geq & \frac{2}{\pi K} \log \left( 1+
       \frac{|f(a_n)-f(a_{n-k})|}{|f(a_{n+k}-f(a_{n-k})|}\right)
       \geq \frac{2}{\pi K} \log \left( 1+\frac{1}{r'+1} \right).
\end{eqnarray*}
If $r<1$, we similarly have
\[
\frac{2\pi}{\log 1/r}\geq \frac{2}{\pi K} \log \left( 1+\frac{1}{1/r'+1} \right).
\]
From Proposition \ref{main}, there exists $L\geq 1$ such that $1/L \leq r' \leq L$
where $L$ does not depend on $n\in \mathbb{Z}$ and $k\in \mathbb{N}$.
Combining the above inequalities, we obtain 
\[
\left( \exp \left( \frac{\pi ^2 K}{\log \left( 1+\frac{1}{L+1}\right)}\right)\right)^{-1}
\leq \frac{|a_{n+k}-a_n|}{|a_n-a_{n-k}|} \leq
\exp \left( \frac{\pi ^2 K}{\log \left( 1+\frac{1}{L+1}\right)}\right).
\]
\syoumeiowari 

\subsection{Proof of Proposition \ref{main}}
We shall prove Proposition \ref{main} from now on.\vspace{1ex}

Under the assumptions of Proposition \ref{main}, we let $C=f(\mathbb{R})$. 
Recall that $C$ is a subarc of a quasicircle which passes through $\infty$. Thus there exists
a constant $A\geq 1$ such that if arbitrary distinct three points $z_1,z_2,z_3$ lie on $C$ in this order, it holds;
\[
\frac{|z_1-z_2|}{|z_1-z_3|}\leq A.
\]
\vspace{1ex}
\begin{lemma} \label{lem2}
For arbitrary $n\in \mathbb{Z}$, it holds that $|f(a_n)-f(a_{n+1})|\leq 2A$.
\end{lemma}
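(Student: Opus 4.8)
The plan is to exploit the three-point condition for the quasicircle $C = f(\mathbb{R})$ together with the fact that $f|_E : E \to \mathbb{Z}$ is a bijection. Write $p = f(a_n)$ and $q = f(a_{n+1})$; these are two integers that are \emph{consecutive} among all integers along the curve $C$, precisely because $a_n$ and $a_{n+1}$ are consecutive in $E$. If $|p - q| = 1$ there is nothing to prove since $2A \geq 2$, so I may assume $|p-q| \geq 2$ and, after relabeling, $p < q$.

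First I would observe that the open subarc of $C$ joining $p$ to $q$ and not passing through $\infty$ is exactly $f\big((a_n, a_{n+1})\big)$, and hence contains no integer at all, because $(a_n, a_{n+1}) \cap E = \emptyset$ while $f(E) = \mathbb{Z}$. Consequently every integer $r$ with $p < r < q$ must lie on the \emph{complementary} arc of $C$. Taking $r = q - 1$, which is a genuine integer strictly between $p$ and $q$ since $|p-q| \geq 2$, the point $r = f(a_m)$ therefore lies on $C$ in such a way that $p$ falls \emph{between} $r$ and $q$ in the order along $C$.

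The decisive step is then to feed this ordering into the three-point condition with $p$ playing the role of the middle point: applying the inequality to the ordered triple $(r, p, q)$ on $C$ gives $|r - p| \leq A\,|r - q|$. Since $|r - q| = 1$ and $|r - p| = |p - q| - 1$, this reads $|p - q| - 1 \leq A$, that is $|p - q| \leq A + 1 \leq 2A$, which is the claimed bound.

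The step I expect to require the most care is the order argument, namely converting the \emph{numerical} betweenness $p < r < q$ into the statement that $p$, and not $r$, is the middle of the three points along $C$; this is exactly what makes the three-point condition yield a nontrivial bound, and it rests entirely on the integer-free arc observation above. A secondary point is to verify that the conclusion is insensitive to whether $f$ preserves or reverses the cyclic order of $\mathbb{R} \cup \{\infty\}$: in the reversed case the same triple, read as $(q, p, r)$, gives $|p - q| \leq A$, which is only stronger, so the bound $|p-q| \leq A + 1 \leq 2A$ holds in every case.
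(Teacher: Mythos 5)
Your strategy is the right one --- find an integer at distance $1$ from something, whose preimage sits on the opposite side of the pair $a_n,a_{n+1}$, and apply the three-point condition with $f(a_n)$ or $f(a_{n+1})$ as the middle point --- but the decisive step is exactly where your argument has a genuine gap. You assert that for $r=q-1=f(a_m)$ the point $p$ falls between $r$ and $q$ in the order along $C$, and you justify this only by the (correct) observation that $r$ lies on the complementary arc. That is not enough: the complementary arc is $f\bigl((-\infty,a_n)\bigr)\cup\{\infty\}\cup f\bigl((a_{n+1},\infty)\bigr)$, and the position of $r$ in the linear order along $C$ is determined by where $a_m$ sits on the real line, not by the numerical value $q-1$. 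Since $f$ need not map $\mathbb{R}$ to $\mathbb{R}$, the values $f(a_j)$ need not be monotone in $j$; nothing prevents, say, $f(a_{n+2})=q-1$ (a quasiconformal half-twist exchanging two adjacent punctures realizes such a permutation). In that case $a_m>a_{n+1}$, the order along $C$ is $p,q,r$, the middle point is $q$, and the three-point condition only yields $|p-q|\le A\,|p-r|=A\bigl(|p-q|-1\bigr)$, which is a lower bound on $|p-q|$ and vacuous for large gaps. Adding the candidate $r'=p+1$ does not repair this: in the symmetric bad configuration ($a_{m}>a_{n+1}$ for $r$ and $a_{m'}<a_n$ for $r'$) every triple you can form gives inequalities of the useless type $1\le A(q-p-1)$.

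The paper supplies the missing combinatorial selection. Every integer in $[p,q]$ is $f(a_j)$ for a unique $j$, with either $j\le n$ or $j\ge n+1$; the first class contains $p$ and the second contains $q$, so among the consecutive integers of $[p,q]$ there is an adjacent pair $f(a_m),f(a_\ell)$ with $|f(a_m)-f(a_\ell)|=1$, $m\le n$ and $\ell\ge n+1$. Now one of the two endpoints $p,q$ is at distance at least $(q-p)/2$ from $f(a_m)$, and for that endpoint the triple $\bigl(f(a_m),\,\text{endpoint},\,f(a_\ell)\bigr)$ genuinely lies on $C$ in this order (because $m\le n<n+1\le\ell$), with outer points at distance $1$; the three-point condition then gives $(q-p)/2\le A$. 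Some version of this pigeonhole argument, which controls on which side the preimages of the two chosen integers lie, is what your single choice $r=q-1$ cannot replace.
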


\begin{proof}
Suppose $|f(a_n)-f(a_{n+1})|\geq 2$. Further we may assume $f(a_{n+1})>f(a_n)$ since the same argument mentioned below
can be applied to the case $f(a_n)>f(a_{n+1})$.\vspace{1ex}

It is easily confirmed, there exist $m,\ell \in \mathbb{Z}$ which satisfy the following conditions;
\begin{enumerate}\setlength{\leftskip}{10ex}
\item $m\leq n$ and $n+1 \leq \ell$,
\item $f(a_n)\leq f(a_m) <f(a_{n+1})$ and $f(a_n)< f(a_{\ell}) \leq f(a_{n+1})$,
\item $|f(a_m)-f(a_{\ell})|=1$. (See, Figure \ref{zu}.)
\end{enumerate}
\begin{figure}[h]
       \begin{center}
           \includegraphics[width=11cm]{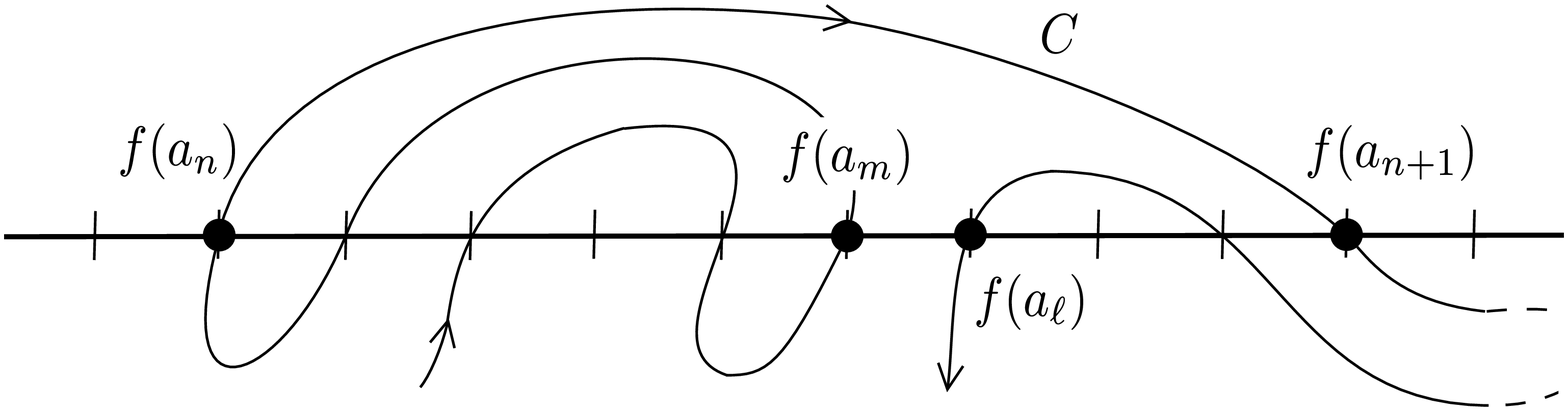} 
       \end{center}
      \caption{}   \label{zu}
\end{figure}\vspace{1ex}

First, suppose $f(a_m)-f(a_n)\geq \left( f(a_{n+1})-f(a_n)\right)/2$. Then $f(a_m),f(a_n),f(a_{\ell})$ are distinct 
since $f(a_m)-f(a_n)\geq 1$, and lie on $C$ in this order. From the three-point condition,
\begin{eqnarray*}
A &\geq & \frac{|f(a_m)-f(a_n)|}{|f(a_m)-f(a_{\ell})|}\\
  &=& f(a_m)-f(a_n) \geq \frac{f(a_{n+1})-f(a_n)}{2}.
\end{eqnarray*}
Thus we have $f(a_{n+1})-f(a_n)\leq 2A$.\vspace{1ex}

Next, suppose $f(a_m)-f(a_n)< \left( f(a_{n+1})-f(a_n)\right)/2$. Then $f(a_{n+1})-f(a_m)>(f(a_{n+1})-f(a_n))/2$ holds.
Since $f(a_{n+1})-f(a_n)>2$,
\[
f(a_{n+1})-f(a_{\ell})> \frac{f(a_{n+1})-f(a_n)}{2}-1 \geq 0,
\]
that is, $\ell \neq n+1$. Therefore $f(a_m),f(a_{n+1}),f(a_{\ell})$ are distinct and are in this order on $C$. Similarly we have
$f(a_{n+1})-f(a_n)\leq 2A$.
\end{proof}
 
\begin{lemma} \label{lem3}
For arbitrary $n\in \mathbb{Z}$ and $k\in \mathbb{N}\ (k\neq 1)$, it follows;
\[
\frac{k-1}{2A} \leq |f(a_n)-f(a_{n+k})| \leq 2Ak.
\]
\end{lemma}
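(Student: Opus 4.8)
The plan is to handle the two inequalities separately: the upper bound follows at once from Lemma \ref{lem2} and the triangle inequality, while the lower bound is where the three-point condition does the work, combined with a counting argument that uses the fact that the images $f(a_j)$ are integers.

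For the upper bound I would chain the consecutive estimates. Since $f(E)=\mathbb{Z}$, the points $f(a_n),f(a_{n+1}),\dots,f(a_{n+k})$ all lie on the real axis, so the triangle inequality gives $|f(a_n)-f(a_{n+k})|\leq \sum_{j=0}^{k-1}|f(a_{n+j})-f(a_{n+j+1})|$, and each of the $k$ summands is at most $2A$ by Lemma \ref{lem2}; hence $|f(a_n)-f(a_{n+k})|\leq 2Ak$. This half needs no new idea.

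For the lower bound I would use that $a_n<a_{n+j}<a_{n+k}$ for $1\leq j\leq k-1$, so that $f(a_n),f(a_{n+j}),f(a_{n+k})$ lie on $C=f(\mathbb{R})$ in this order. Applying the three-point condition to each such triple with $z_1=f(a_n)$, $z_2=f(a_{n+j})$, $z_3=f(a_{n+k})$ yields $|f(a_n)-f(a_{n+j})|\leq A\,|f(a_n)-f(a_{n+k})|$. Thus every intermediate image is trapped in the closed disk of radius $R:=A\,|f(a_n)-f(a_{n+k})|$ centered at $f(a_n)$. Now I invoke that $f$ restricts to a bijection $E\to\mathbb{Z}$: the values $f(a_{n+1}),\dots,f(a_{n+k-1})$ are $k-1$ pairwise distinct integers, all lying in the interval $[\,f(a_n)-R,\ f(a_n)+R\,]$ and all distinct from the integer $f(a_n)$. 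Since that interval contains at most $2R$ integers other than $f(a_n)$, we obtain $k-1\leq 2R=2A\,|f(a_n)-f(a_{n+k})|$, which rearranges to $|f(a_n)-f(a_{n+k})|\geq (k-1)/(2A)$, as desired.

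I expect the only non-routine point to be recognizing that the three-point condition should be used to confine the intermediate images to a disk whose radius is controlled by $|f(a_n)-f(a_{n+k})|$; once this is seen, the fact that all images lie in $\mathbb{Z}$ converts the geometric confinement into the counting inequality $k-1\leq 2R$ almost mechanically. The order-preservation of $f$ along $\mathbb{R}$ and the upper bound are immediate by comparison.
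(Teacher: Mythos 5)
Your proof is correct and follows essentially the same route as the paper: the upper bound is the identical chaining of Lemma \ref{lem2} via the triangle inequality, and the lower bound uses the same two ingredients (the three-point condition along $C=f(\mathbb{R})$ together with the integrality and distinctness of the images $f(a_{n+j})$). The only cosmetic difference is that the paper runs the pigeonhole first, producing a single intermediate point $f(a_m)$ with $|f(a_n)-f(a_m)|\geq (k-1)/2$ and applying the three-point condition once, whereas you apply the three-point condition to every intermediate point and then count integers in the resulting disk; the two arrangements are contrapositives of one another and yield the same constant.
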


\begin{proof}\ \\
\ \ 
(\textit{Upper bound}). By using the triangle inequality, it immediately follows from Lemma \ref{lem2} that
$|f(a_n)-f(a_{n+k})|\leq 2Ak$.\vspace{1ex}

(\textit{Lower bound}). Suppose $k\neq 1$. The open interval $\displaystyle \left(
f(a_n)-\frac{k-1}{2},\ f(a_n)+\frac{k-1}{2}\right)$ contains at most $(k-1)$ integer points.
Thus there exists $m\in \mathbb{Z}\ (n<m<n+k)$ such that
\[
|f(a_n)-f(a_m)|\geq \frac{k-1}{2}.
\] 
From the three-point condition, we obtain
\[
A \geq \frac{|f(a_n)-f(a_m)|}{|f(a_n)-f(a_{n+k})|} \geq \frac{k-1}{2|f(a_n)-f(a_{n+k})|},
\]
that is, $|f(a_n)-f(a_{n+k})|\geq (k-1)/2A$.
\end{proof} \vspace{1ex}

\subsubsection*{Proof of Proposition \ref{main}}
If $k\neq 1$, it immediately follows from Lemma \ref{lem3} that 
\[
\frac{1}{L} \leq \frac{|f(a_{n+k})-f(a_n)|}{|f(a_n)-f(a_{n-k})|} \leq L
\]
for $L=8A^2$. Moreover, even if $k=1$, it follows from Lemma \ref{lem2}
\[
8A^2>2A\geq \frac{|f(a_{n+1})-f(a_n)|}{|f(a_n)-f(a_{n-1})|} \geq \frac{1}{2A} >\frac{1}{8A^2}.
\]
\syoumeiowari

\section{Proof of Theorem\ \ref{thmA}}

In this section, first, we shall prove Theorem \ref{thmA}. 
Next, we introduce an another example for which almost the same 
result holds. Finally, we would like to suggest a natural question
arising from the above observations.

\subsection{Proof of Theorem \ref{thmA}}
\label{sss}

\begin{thma} \ 
Let $E\in \mathscr{P}$ which has the following form;
\[
E=\mathbb{Z}+\{ a_n\}_{n=1}^{m} 
\]
where each $a_n$ satisfies ${\rm Re}(a_n) \in [0,1)$. 

Then, $E$ is quasiconformally equivalent to $\mathbb{Z}$ if and only if $m<+\infty$.
\end{thma}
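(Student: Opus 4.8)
The statement is an equivalence, so the plan splits into the two implications; the forward implication ($m<\infty\Rightarrow$ equivalence) is constructive, while the converse ($\text{equivalence}\Rightarrow m<\infty$) carries the real difficulty.

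For the \textbf{forward direction} I would argue by normalising the configuration. Writing $E=\bigcup_{n=1}^{m}(\mathbb{Z}+a_{n})$, the $1$-periodicity of $E$ means it descends to $m$ distinct points $\bar a_{1},\dots,\bar a_{m}$ on the flat cylinder $A=\mathbb{C}/\langle z\mapsto z+1\rangle$, while the target $\tfrac1m\mathbb{Z}=\bigcup_{n=1}^m(\mathbb{Z}+\tfrac{n-1}{m})$ descends to the $m$ equally spaced points $\tfrac{n-1}{m}$ on the equator of $A$. Since the ordered configuration space of $m$ distinct points on the connected surface $A$ is path-connected, there is an isotopy of $A$ carrying $\{\bar a_n\}$ to $\{\tfrac{n-1}{m}\}$, and by the isotopy extension theorem it is realised by an ambient isotopy that is the identity outside a compact sub-cylinder. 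Its time-one map is a diffeomorphism of $A$ equal to the identity near both ends, hence quasiconformal, and its lift $\tilde H:\mathbb{C}\to\mathbb{C}$ is a periodic quasiconformal homeomorphism with $\tilde H(E)=\tfrac1m\mathbb{Z}$. Post-composing with $z\mapsto mz$ carries $\tfrac1m\mathbb{Z}$ onto $\mathbb{Z}$. Finiteness of $m$ enters precisely here: for $m=\infty$ the marked points leave every compact part of $A$ and no such compactly supported isotopy exists.

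For the \textbf{converse} I would assume a $K$-quasiconformal $f:\mathbb{C}\to\mathbb{C}$ with $f(E)=\mathbb{Z}$ and seek a contradiction when $m=\infty$. Two structural facts drive the argument. First, $E\subset \Gamma:=f^{-1}(\mathbb{R})$, a quasicircle through $\infty$ satisfying the three-point condition with some constant $A$, along which the points of $E$ occur in the order of their integer images. Second, each track $T_{j}=\mathbb{Z}+a_{j}$ lies on a horizontal line $L_{j}$, whose image $f(L_j)$ is again a quasicircle with constant $A$; applying the three-point condition to triples drawn \emph{from the single track} $T_j$ — exactly the computation behind the lower bound of Lemma \ref{lem3}, which never leaves the set it starts in — gives the spreading estimate $|f(a_j+k)-f(a_j+k')|\ge (|k-k'|-1)/2A$, so that $f(T_j)$ is unbounded and, by bounded turning, escapes to $+\infty$ at one end of $T_j$ and to $-\infty$ at the other. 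Because $E$ is closed and discrete with $\mathrm{Re}(a_n)\in[0,1)$, the heights $\mathrm{Im}(a_n)$ form an unbounded set.

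I would then split on the vertical spacing of the tracks. If the gaps between consecutive tracks stay bounded, then near $\mathrm{Im}=+\infty$ the set $E$ is a relatively dense net and therefore fails to be porous; since porosity is a quasiconformal invariant (Väisälä) and $\mathbb{Z}$ is $1$-porous, this is a contradiction. The remaining case, which is the \textbf{main obstacle}, is the \emph{sparse} one, already exhibited by $E_{1}=\mathbb{Z}+i\{2^{n}\}$: here $E$ \emph{is} porous, so porosity is useless and one must instead exploit $\Gamma$. The plan is to use that arbitrarily tall $E$-free horizontal strips occur between far-apart tracks, forcing $\Gamma$ to cross each strip along arcs of large diameter; bounded turning then makes the corresponding consecutive marked points far apart, while the per-track spreading estimate constrains how infinitely many tracks can be threaded along $\Gamma$ in the $\mathbb{Z}$-order. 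The delicate point — and the step I expect to require the most care — is pinning down the combinatorial order in which points of the various tracks appear along $\Gamma$, since the three-point condition only produces a contradiction once one certifies that a specific troublesome triple genuinely occurs in the wrong order; reconciling this global ordering with the local unboundedness of each $f(T_j)$ is the crux of the converse.
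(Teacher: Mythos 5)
Your sufficiency argument is correct and is in substance the paper's: both pass to the quotient by $\langle z+1\rangle$ and use that the marked quotient surface is of finite type; whether one phrases the map via isotopy extension on the cylinder or via a quasiconformal homeomorphism of $(m+2)$-punctured spheres fixing the two ends is immaterial.

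The necessity direction, however, has a genuine gap, and it sits exactly where you flag it. Your dichotomy on the vertical spacing of the tracks buys essentially nothing: if $E$ were quasiconformally equivalent to $\mathbb{Z}$ it would automatically be porous by V\"ais\"al\"a's theorem, so the ``bounded gaps'' branch is vacuous and one is always in the sparse case exemplified by $E_1=\mathbb{Z}+i\{2^n\}$ --- which is precisely the case you leave as an unresolved ``delicate point''. Your proposed mechanism (certifying that a troublesome triple occurs in the wrong order along $\Gamma=f^{-1}(\mathbb{R})$, reconciled with the per-track spreading estimate) is not carried out, and it is doubtful it can be: the images $f(T_j)$ are merely unbounded sets of integers on a quasicircle, their order along $\Gamma$ need not be monotone, and controlling how infinitely many tracks interleave along $\Gamma$ is exactly the combinatorics the three-point condition does not hand you. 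The paper avoids all of this with an extremal-distance argument exploiting the periodicity of $E$ twice (with the convention $f(\mathbb{Z})=E$). First, porosity of $E$ together with $1$-periodicity turns an $E$-free disk of radius $>1$ into an $E$-free horizontal strip; taking such disks of every size at controlled heights, and noting $E$ has points both above and below each strip, one finds consecutive images with $\ell:=|{\rm Im}f(m)-{\rm Im}f(m+1)|$ arbitrarily large. Second --- the idea your sketch is missing --- one attaches to $f(m)$ and to $f(m+1)$ the unit horizontal segments $C_1'=f(m)+[0,1]$ and $C_2'=f(m+1)+[0,1]$, whose endpoints all lie in $E$ by periodicity; hence $C_i=f^{-1}(C_i')$ has both endpoints in $\mathbb{Z}$, so ${\rm diam}\,C_i\geq 1$, while ${\rm dist}(C_1,C_2)\leq 1$ since $m\in C_1$ and $m+1\in C_2$. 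Vuorinen's inequality then gives ${\rm mod}$ of the curve family joining $C_1$ and $C_2$ at least $\tfrac{2}{\pi}\log 2$, whereas quasi-invariance of moduli and the separating annulus $\{1<|z-f(m)|<\ell\}$ bound it above by $2\pi K/\log \ell$; this is a contradiction once $\ell>\exp(K\pi^2/\log 2)$. No global analysis of the ordering along $\Gamma$ is required, and this is the step your proposal would need to supply or replace.
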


\vspace{-1ex}
\begin{proof}\ 

(\textit{Necessity}).
To obtain a contradiction, assume $m=+\infty$.
Let $f:\mathbb{C} \rightarrow \mathbb{C}$ be a $K$-quasiconformal homeomorphism with $f(\mathbb{Z})=E$, 
and by composing an Affine transformation, we may assume $0\in  E$, and  
$\sup \left\{ {\rm Im}(a_n) \mid n\in \mathbb{N}\right\} =\infty$. 

Under the above assumptions, we prove the following lemma.
\vspace{2mm}

\begin{lemma} \label{lemma1}
\hspace{2.5cm}
$\displaystyle 
\sup_{m\in \mathbb{Z}} \left| {\rm Im}f(m) - {\rm Im}f(m+1)\right|=\infty
$.
\end{lemma}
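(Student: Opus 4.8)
The plan is to argue by contradiction, combining the discreteness of the set of heights occurring in $E$ with the porosity of $E$. Write $H=\{{\rm Im}(a_n)\}$ for the set of imaginary parts appearing in $E=\mathbb{Z}+\{a_n\}$. Every point of $E$ has imaginary part in $H$, so ${\rm Im}f(m)\in H$ for all $m$, and $\sup H=+\infty$ by the normalization $\sup_n{\rm Im}(a_n)=\infty$. Because $E$ is closed and discrete while ${\rm Re}(a_n)\in[0,1)$, the set $H$ can have no finite accumulation point: heights $h_{n_i}$ converging to a finite limit would, together with their bounded real parts, force the points $a_{n_i}$ to accumulate in $\mathbb{C}$, contradicting the discreteness of $E$. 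Hence $H$ is a closed discrete subset of $\mathbb{R}$, and I list its nonnegative part as $0=H_0<H_1<H_2<\cdots\to+\infty$, with consecutive gaps $G_j=H_{j+1}-H_j$.

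The crucial step is to show that these gaps are unbounded. Since $\mathbb{Z}$ is $1$-porous and porosity is preserved by quasiconformal mappings, the image $E=f(\mathbb{Z})$ is porous. If the gaps $G_j$ were all bounded by some $G$, then the rows of $E$ would be $G$-dense vertically and $1$-dense horizontally; consequently a disk centred high above the real axis, with radius comparable to its height, would lie in a region so densely filled by $E$ that the complement has bounded inradius there. This contradicts porosity, which demands that every disk of radius $r$ contain an omitted disk of radius $r/c$. Therefore $\sup_j G_j=+\infty$.

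Now suppose, for contradiction, that $\sup_m|{\rm Im}f(m+1)-{\rm Im}f(m)|=B<+\infty$, and choose $j^{\ast}$ with $G_{j^{\ast}}>B$. The sequence $y_m:={\rm Im}f(m)$ takes values only in $H$, satisfies $y_0=0\le H_{j^{\ast}}$, and has consecutive increments of absolute value at most $B$. As $H$ contains no value strictly between $H_{j^{\ast}}$ and $H_{j^{\ast}+1}$ while $H_{j^{\ast}+1}-H_{j^{\ast}}>B$, the sequence can never pass from a value $\le H_{j^{\ast}}$ to one $\ge H_{j^{\ast}+1}$; hence $y_m\le H_{j^{\ast}}$ for every $m\in\mathbb{Z}$. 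This forces $\sup_m{\rm Im}f(m)\le H_{j^{\ast}}<\infty$, contradicting $\sup_m{\rm Im}f(m)=\sup H=+\infty$ (every height of $H$ is attained because $f$ maps $\mathbb{Z}$ onto $E$). This proves the lemma.

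I expect the main obstacle to be the middle step, namely extracting \emph{unbounded} vertical gaps in $H$. The final trapping argument is elementary once a single gap exceeds $B$, but producing such a gap genuinely uses that $E$ is a quasiconformal image of the porous set $\mathbb{Z}$: without invoking porosity the heights could a priori be only boundedly spaced, in which case the imaginary increments could stay bounded and the statement would fail. Notably, this route proves the lemma using only the quasi-invariance of porosity, without appealing to the three-point condition employed elsewhere in the paper.
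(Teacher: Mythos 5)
Your proof is correct and follows essentially the same route as the paper's: both arguments combine V\"ais\"al\"a's quasi-invariance of porosity with the $z\mapsto z+1$ invariance of $E$ to produce arbitrarily tall horizontal strips in the upper half-plane that miss $E$ yet have points of $E$ on both sides, and then note that the enumeration $m\mapsto f(m)$ of $E$ must jump such a strip between some consecutive integers. Your reformulation via the gap sequence of the height set $H$ and a trapping contradiction is just a repackaging of the paper's direct version of this last step (and the minor slip $y_0=0$ should read $y_{m_0}=0$ for some $m_0$, since the normalization only gives $0\in E$, not $f(0)=0$).
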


\begin{proof}\ 
Since $\mathbb{Z}$ is porous, by \vai 's theorem, $E$ is $c$-porous for some $c\geq 1$.
For any $r>1$ let $x=i\left\{(\sqrt{2}c+1)r+1\right\}$. Then by porousity of $E$, there
exist $z\in \overline{B}_{\sqrt{2}cr}(x)$ such that $B_{\sqrt{2}r}(z) \subset \mathbb{C}\setminus E$.
Then the square domain $\{w=u+iv \mid  |u-{\rm Re}z|<r,\ |v-{\rm Im}z|<r\}$ does not intersect 
with $E$.
\begin{figure}[h]
       \begin{center}
           \includegraphics[width=12cm]{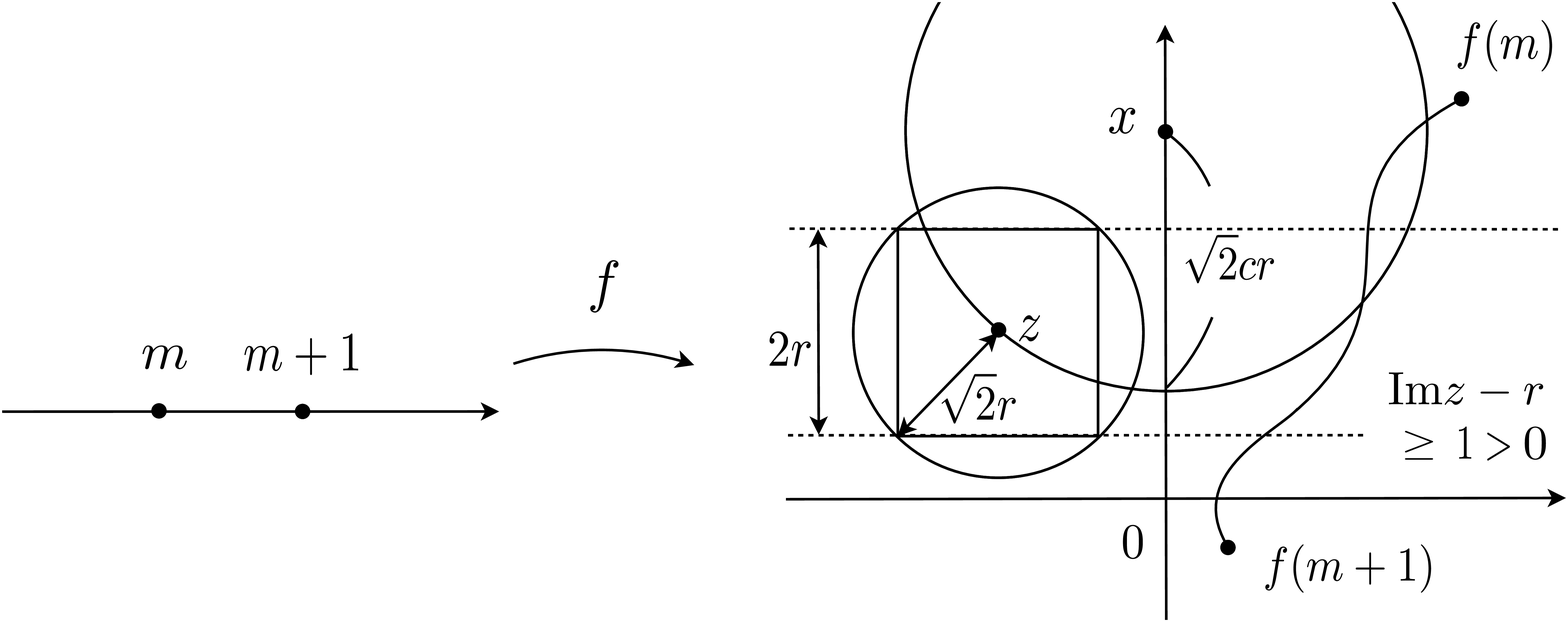} 
       \end{center}
       \vspace{-3ex}\caption{}   
\end{figure}\vspace{-0ex}

It is easily confirmed that
\begin{itemize}
\item $E\cap \left\{ w \mid {\rm Im}z-r<{\rm Im}w<{\rm Im}z+r\right\}=\emptyset$, since $z+1\in {\rm Aut}(\mathbb{C}\setminus E)$.
\item $E\cap \left\{ w \mid {\rm Im}w\geq {\rm Im}z+r\right\}\neq \emptyset$, since $\sup \left\{ {\rm Im}a \mid a\in E\right\} =\infty$.
\item $E\cap \left\{ w \mid {\rm Im}z-r\geq{\rm Im}w\right\}\neq\emptyset$, since $0\in E$ and ${\rm Im}z-r\geq 1$.
\end{itemize}

Therefore when we consider the image of real line under $f$, it immediately follows there exists $m\in\mathbb{Z}$
such that $|f(m)-f(m+1)|\geq 2r$.
\end{proof}

\vspace{2mm}
\subsubsection*{Continuation of Proof of Theorem \ref{thm1}}\ 

By Lemma \ref{lemma1}, there exists $m\in \mathbb{Z}$ such that
\[
\ell := |{\rm Im}f(m)-{\rm Im}f(m+1)|>\exp\left( \frac{K\pi^2}{\log 2}\right).
\]
Let
\begin{eqnarray*}
&C_1':=\left\{ f(m)+t \mid t\in [0,1]\right\},\ &C_1:=f^{-1}(C_1')\\
&C_2':=\left\{ f(m+1)+t \mid t\in [0,1]\right\},\ &C_2:=f^{-1}(C_2').
\end{eqnarray*}

Then we have,
\begin{enumerate}
\item by quasiconformality of $f$ \[\edc{C_1}{C_2}\leq K\edc{C_1'}{C_2'} ,\]
\item since $C_1'$ and $C_2'$ are separeted by the annulus $\{ z \mid 1<|z-f(m)|<\ell\}$
\[\edc{C_1'}{C_2'}\leq \frac{2\pi}{\log \ell}<\frac{2}{K\pi}\log 2,\]
\item From Vuorinen's theorem,
\[
\edc{C_1}{C_2} \geq \frac{2}{\pi} \log \left( 1+ \frac{\min_{i=1,2}{\rm diam}(C_i)}{{\rm dist}(C_1,C_2)} \right)\geq \frac{2}{\pi} \log \left( 1+ \min_{i=1,2} {\rm diam} C_i\right).\]
\end{enumerate}

\begin{figure}[h]
       \begin{center}
           \includegraphics[width=11cm]{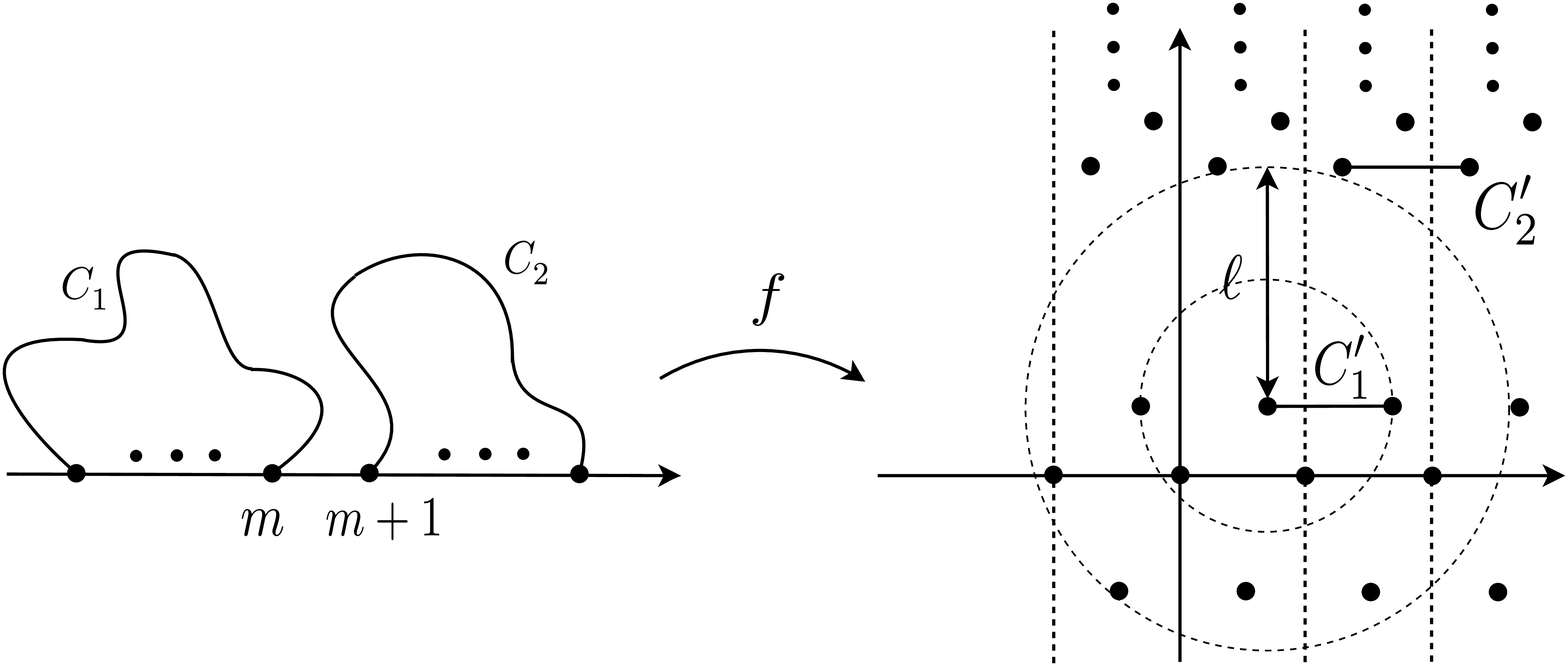} 
       \end{center}
       \caption{}   
\end{figure}

Combining the above inequalities, we obtain
\[
\min_{i=1,2} {\rm diam}C_i <1.
\]
On the other hand, since each endpoints of $C_i$ are in the integer set, ${\rm diam}C_i\geq 1\ (i=1,2)$. This is a contradiction. \\

(\textit{Sufficiency}).
Since $\left(\mathbb{C}\setminus \mathbb{Z}\right)\big/ \langle z+m \rangle$ and
$\left( \mathbb{C}\setminus E\right) \big/ \langle z+1\rangle$ are $(m+2)$-punctured Riemann sphere,
there exists a quasiconformal homeomorphism between them which fixes $0$ and $\infty$. Then it 
can be lifted to a quasiconfromal homeomorphism between $\mathbb{C}\setminus \mathbb{Z}$ 
and $\mathbb{C}\setminus E$  
\end{proof}
\subsection*{Remark}
The necessity and the proof of the sufficiency part shows that if a Riemann surface $R$ which has an automorphism of
infinite order is quasiconformally
equivalent to $\mathbb{C}\setminus \mathbb{Z}$,
then there exists a periodic quasiconformal deformation from $\mathbb{C} \setminus \mathbb{Z}$ to $R$ coming from the deformation of finitely
punctured Riemann sphere. 

\begin{coroa}
\hspace{5ex} $\displaystyle T_{\infty}=\bigcup_{[f]\in {\rm Mod}(\mathbb{C}\setminus\mathbb{Z})} [f]_{\ast}\left(T_0\right)$.
\end{coroa}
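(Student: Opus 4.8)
The plan is to prove the two inclusions separately, with the reverse inclusion carrying the essential content of Theorem \ref{thmA}.

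First I would establish $\bigcup_{[f]\in {\rm Mod}(\mathbb{C}\setminus\mathbb{Z})}[f]_{\ast}(T_0)\subseteq T_{\infty}$. The two key observations are that $T_0\subseteq T_{\infty}$ and that $T_{\infty}$ is invariant under the modular group action. For the first, a point of $T_0$ lies in some $p_n^{\ast}(T(R_n))$, hence is represented by a periodic quasiconformal deformation $g:\mathbb{C}\setminus\mathbb{Z}\to S$ whose Beltrami coefficient is $\langle z+n\rangle$-invariant; consequently $g\circ(z+n)$ has the same Beltrami coefficient as $g$, so $g\circ(z+n)\circ g^{-1}$ is a conformal automorphism of $S$, and it has infinite order because $z\mapsto z+n$ does. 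Thus $[S,g]\in T_{\infty}$. For the second observation, $T_{\infty}$ is defined by a property of the underlying surface $S$ alone (the existence of an infinite-order element of ${\rm Aut}(S)$), whereas the action of $[f]\in{\rm Mod}(\mathbb{C}\setminus\mathbb{Z})$ only alters the marking and leaves $S$ unchanged; hence $[f]_{\ast}(T_{\infty})=T_{\infty}$ for every $[f]$. Combining the two gives $[f]_{\ast}(T_0)\subseteq[f]_{\ast}(T_{\infty})=T_{\infty}$.

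Next I would prove the reverse inclusion $T_{\infty}\subseteq\bigcup_{[f]}[f]_{\ast}(T_0)$. Take $[S,f]\in T_{\infty}$, so ${\rm Aut}(S)$ contains an element of infinite order. Since $f:\mathbb{C}\setminus\mathbb{Z}\to S$ is quasiconformal, $S$ is quasiconformally equivalent to $\mathbb{C}\setminus\mathbb{Z}$, so by the removable singularity theorem $S$ is conformally $\mathbb{C}\setminus E$ for a closed discrete $E$ that is quasiconformally equivalent to $\mathbb{Z}$. As noted in the Introduction, after composing with an affine transformation I may assume the infinite-order automorphism is $z\mapsto z+1$ and $E=\mathbb{Z}+\{a_n\}_{n=1}^{m}$ with ${\rm Re}(a_n)\in[0,1)$. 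Theorem \ref{thmA} then forces $m<+\infty$, and the sufficiency part of Theorem \ref{thmA} (together with the Remark following its proof) produces a periodic quasiconformal deformation $g:\mathbb{C}\setminus\mathbb{Z}\to S$ obtained as the lift of a deformation of the $(m+2)$-punctured sphere $R_m$; by construction the class $[S,g]$ lies in $p_m^{\ast}(T(R_m))\subseteq T_0$.

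It remains to move $[S,f]$ into the modular orbit of $[S,g]$, and this is the step I expect to require the most care. Both $[S,f]$ and $[S,g]$ have the same underlying Riemann surface $S$, hence they lie in the same fiber of the projection $T(\mathbb{C}\setminus\mathbb{Z})\to T(\mathbb{C}\setminus\mathbb{Z})/{\rm Mod}(\mathbb{C}\setminus\mathbb{Z})$ onto moduli space; equivalently, $g^{-1}\circ f$ is a quasiconformal self-homeomorphism of $\mathbb{C}\setminus\mathbb{Z}$ representing an element $[h]\in{\rm Mod}(\mathbb{C}\setminus\mathbb{Z})$ with $[S,f]=[h]_{\ast}[S,g]$, so that $[S,f]\in[h]_{\ast}(T_0)$. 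The delicate points are verifying that the lift $g$ genuinely represents a point of $p_m^{\ast}(T(R_m))$ (so the invariant Beltrami differential descends correctly to $R_m$ and its lift reproduces $S=\mathbb{C}\setminus E$), and checking that two markings of the same surface always differ by an element of the modular group with the correct direction of composition. Both facts are standard in Teichm\"uller theory, but they are exactly where the marking bookkeeping must be done carefully to land the point in $T_0$ rather than merely in the moduli fiber.
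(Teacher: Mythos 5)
Your proposal is correct and follows essentially the same route as the paper, which disposes of the corollary via the Remark after Theorem \ref{thmA}: the nontrivial content is exactly your reverse inclusion, namely that any $[S,f]\in T_{\infty}$ admits (after normalizing the infinite-order automorphism to $z\mapsto z+1$ and invoking Theorem \ref{thmA} to get $m<+\infty$) a periodic marking $g$ lifted from a deformation of the $(m+2)$-punctured sphere, hence lies in a modular translate of $p_m^{\ast}(T(R_m))\subseteq T_0$. The forward inclusion and the marking bookkeeping you flag are standard and are left implicit in the paper; your write-up just makes them explicit.
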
 

Here, symbols used in Corollary \ref{coro1} are defined in Introduction.

\subsection{Another example} \label{another}
For a Riemann surface $R$, let ${\rm Aut}_{\infty}(R)\subset \aut(R)$ be the set of all automorphism of infinite order.
The following Theorem \ref{thm1} is a mere rephrasing of Theorem \ref{thmA}.

\begin{thm} \label{thm1}
For $E\in \mathscr{P}$ with ${\rm Aut}_{\infty}(\mathbb{C}\setminus E)\neq \emptyset$,
the following are equivalent.

\begin{enumerate}\setlength{\leftskip}{4ex}
\item $\mathbb{C}\setminus E$ is quasiconformally equivalent to $\mathbb{C}\setminus\mathbb{Z}$.
\item For any $h\in {\rm Aut}_{\infty}(\mathbb{C}\setminus E)$, the quotient space $\left( \mathbb{C}\setminus E\right)/ \langle h\rangle $
      is a finitely punctured Riemann sphere.
\item There exists $h\in {\rm Aut}_{\infty}(\mathbb{C}\setminus E)$ such that the quotient space $\left( \mathbb{C}\setminus E\right)/ \langle h\rangle $
      is a finitely punctured Riemann sphere.
\end{enumerate}
\end{thm}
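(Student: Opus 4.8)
The plan is to establish the cyclic chain (i) $\Rightarrow$ (ii) $\Rightarrow$ (iii) $\Rightarrow$ (i), using Theorem \ref{thmA} as the analytic core and reducing every condition to the single combinatorial quantity $m$ appearing there. The first step is a structural lemma: every $h\in {\rm Aut}_\infty(\mathbb{C}\setminus E)$ is a translation. Indeed, by the removable singularity theorem $h$ extends to a conformal automorphism $z\mapsto az+b$ of $\mathbb{C}$ permuting $E$. If $a\neq 1$ it has a unique fixed point $p$; when $|a|\neq 1$ the forward or backward orbit of any point of $E\setminus\{p\}$ accumulates at $p$, contradicting that $E$ is closed and discrete, while when $|a|=1$ with $a$ not a root of unity such an orbit is dense in a circle, again contradicting discreteness. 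Hence $a$ is a root of unity, but then some power of $h$ is the identity, contradicting infinite order unless $a=1$. Therefore $h(z)=z+\tau$ with $\tau\neq 0$.

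Conjugating by the affine map $z\mapsto z/\tau$ (which preserves both quasiconformal equivalence and the biholomorphism type of the quotient, since it sends $z+\tau$ to $z+1$) I may, for whichever automorphism is relevant in a given implication, assume $z+1\in {\rm Aut}_\infty(\mathbb{C}\setminus E)$; then $E$ is invariant under $z\mapsto z+1$ and takes the normalized form $E=\mathbb{Z}+\{a_n\}_{n=1}^m$ with ${\rm Re}(a_n)\in[0,1)$ as in Theorem \ref{thmA}. With this normalization $(\mathbb{C}\setminus E)/\langle z+1\rangle$ is the $(m+2)$-punctured sphere (the cylinder $\mathbb{C}/\langle z+1\rangle$ has two ideal ends and acquires $m$ further punctures from $E$), so it is finitely punctured if and only if $m<\infty$; and by Theorem \ref{thmA} this is exactly the condition that $E$, equivalently $\mathbb{C}\setminus E$, is quasiconformally equivalent to $\mathbb{Z}$, equivalently to $\mathbb{C}\setminus\mathbb{Z}$. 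Normalizing the automorphism furnished by (iii) therefore gives (iii) $\Rightarrow$ (i) at once, and (ii) $\Rightarrow$ (iii) is trivial because ${\rm Aut}_\infty(\mathbb{C}\setminus E)\neq\emptyset$.

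The remaining implication (i) $\Rightarrow$ (ii) is where the universal quantifier must be controlled, and this is the main obstacle: I must rule out that some \emph{other} infinite order automorphism produces an infinitely punctured quotient. The key observation is that (i) forces $m<\infty$ via Theorem \ref{thmA}, and then ${\rm Im}(E)=\{{\rm Im}(a_1),\dots,{\rm Im}(a_m)\}$ is a finite set, i.e.\ $E$ lies on finitely many horizontal lines. Any $h=z\mapsto z+\tau$ in ${\rm Aut}_\infty$ permutes these lines, so translation by ${\rm Im}(\tau)$ preserves the finite set ${\rm Im}(E)$; a nonempty finite subset of $\mathbb{R}$ admits no nonzero translational symmetry, whence ${\rm Im}(\tau)=0$ and $\tau$ is a nonzero real number. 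A fundamental domain for $\langle z+\tau\rangle$ is then a vertical strip of finite width meeting each of the finitely many horizontal lines in finitely many points of $E$, so $(\mathbb{C}\setminus E)/\langle h\rangle$ is again a finitely punctured sphere. This establishes (ii) for every $h$ and closes the cycle.

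I expect the translation lemma and this last horizontal-lines argument to carry the real weight: the lemma pins down the shape of every infinite order automorphism, and the finiteness of ${\rm Im}(E)$ is precisely what prevents an automorphism in a genuinely different (in particular, non-real) direction from existing once (i) holds. In effect the argument also shows that the excluded rank-two-lattice situation, in which every cyclic quotient is infinitely punctured, cannot coexist with condition (i), so no separate case analysis is needed.
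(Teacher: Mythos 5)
Your proposal is correct and follows essentially the same route as the paper, which treats Theorem \ref{thm1} as ``a mere rephrasing'' of Theorem \ref{thmA} obtained by normalizing an infinite-order automorphism to $z\mapsto z+1$ via an affine conjugation. You merely make explicit two details the paper leaves implicit --- that every $h\in{\rm Aut}_{\infty}(\mathbb{C}\setminus E)$ extends to a translation of $\mathbb{C}$, and that once (i) forces $m<\infty$ the finiteness of ${\rm Im}(E)$ makes \emph{every} such $h$ a real translation with finitely punctured quotient, which is exactly what is needed to get the universally quantified statement (ii) rather than just (iii).
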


Now, we would like to consider the another infinite type Riemann surface
$R'=\mathbb{C}^{\ast}\setminus \{2^n\}_{n\in \mathbb{Z}}$, where $\mathbb{C}^{\ast}=\mathbb{C}\setminus\{0\}$. 
In this case, a similar theorem is proved far more easily than the case of $\mathbb{C}\setminus\mathbb{Z}$, because of
the relative compactness of the fundamental domain of $\langle 2^nz\rangle$ 
(contrary to this, the fundamental domain of $\langle z+n\rangle$ is not relatively compact in $\mathbb{C}$).

\begin{thm} \label{thm2}
For a closed discrete infinite subset $E\subset \mathbb{C}^{\ast}$ with ${\rm Aut}_{\infty}(\mathbb{C}^{\ast} \setminus E)\neq \emptyset$,
the following are equivalent.

\begin{enumerate}\setlength{\leftskip}{4ex}
\item $\mathbb{C}^{\ast} \setminus E$ is quasiconformally equivalent to $R'$.
\item For any $h\in {\rm Aut}_{\infty}(\mathbb{C}^{\ast}\setminus E)$, the quotient space $\left( \mathbb{C}^{\ast} \setminus E\right)/ \langle h\rangle $
      is a finitely punctured torus.
\item There exists $h\in {\rm Aut}_{\infty}(\mathbb{C}^{\ast}\setminus E)$ such that the quotient space $\left( \mathbb{C}^{\ast}\setminus E\right)/ \langle h\rangle $
      is a finitely punctured torus.
\end{enumerate}
\end{thm}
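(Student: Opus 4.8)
The plan is to mirror the proof of Theorem \ref{thm1}, but to exploit the fact that the fundamental domain of an infinite-order automorphism of $\mathbb{C}^{\ast}$ is relatively compact, which trivialises exactly the step that was hard in the plane case.

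First I would record the structural lemma on automorphisms. Given $h\in\aut_{\infty}(\mathbb{C}^{\ast}\setminus E)$, the removable singularity theorem lets $h$ extend to a conformal automorphism of $\mathbb{C}^{\ast}$ (filling in the isolated punctures $E$), and every automorphism of $\mathbb{C}^{\ast}$ preserves the two non-isolated ends $0,\infty$. Since $\aut(\mathbb{C}^{\ast})$ consists of the maps $z\mapsto az$ and $z\mapsto a/z$, and the latter has order $2$, $h$ must be of the form $z\mapsto az$. If $|a|=1$ with $a$ not a root of unity, then every $h$-orbit is dense in a circle, contradicting discreteness of $E$; hence $|a|\neq 1$, and after replacing $h$ by $h^{-1}$ I may assume $|a|>1$. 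Then $\langle h\rangle$ acts freely and properly discontinuously on $\mathbb{C}^{\ast}$ with the closed annulus $\{1\le |z|\le |a|\}$ as a fundamental domain, and $\mathbb{C}^{\ast}/\langle h\rangle$ is a torus. The crucial point is that this fundamental domain is \emph{compact} in $\mathbb{C}^{\ast}$, so $E\cap\{1\le |z|\le |a|\}$, being a closed discrete subset of a compact set, is finite. Consequently $(\mathbb{C}^{\ast}\setminus E)/\langle h\rangle$ is a finitely punctured torus for \emph{every} such $h$. This already gives (ii) under the standing hypothesis, so (i)$\Rightarrow$(ii) holds trivially, and (ii)$\Rightarrow$(iii) is immediate because $\aut_{\infty}(\mathbb{C}^{\ast}\setminus E)\neq\emptyset$. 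I would emphasise that this is precisely where the argument is easier than for $\mathbb{C}\setminus\mathbb{Z}$: there the strip fundamental domain is non-compact, and the finiteness of the punctures had to be extracted from the porosity and extremal-distance estimates in the proof of Theorem \ref{thmA}.

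The remaining and only substantial implication is (iii)$\Rightarrow$(i). Working in the logarithmic coordinate $w=\log z$, I would pass from $\mathbb{C}^{\ast}$ to the cylinder $\mathcal{C}=\mathbb{C}/2\pi i\mathbb{Z}$, on which $h$ becomes the translation $w\mapsto w+\lambda$ with $\lambda=\log a$, $\mathrm{Re}\,\lambda>0$, and $R'$ becomes $\mathcal{C}\setminus(\log 2)\mathbb{Z}$ with its translation $w\mapsto w+\log 2$. The image of $E$ is a $\langle w\mapsto w+\lambda\rangle$-invariant set with exactly $p$ points per period (where $p$ is the number of punctures found above), accumulating only at the two ends $\mathrm{Re}\,w\to\pm\infty$. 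I would then build a quasiconformal self-homeomorphism of $\mathcal{C}$ carrying this set onto $(\log 2)\mathbb{Z}$ in two easy steps: first a $\lambda$-equivariant map that, on one compact period, moves the $p$ representatives to the equally spaced points $0,\lambda/p,\dots,(p-1)\lambda/p$, turning the image of $E$ into the single orbit $\{m\lambda/p\}_{m\in\mathbb{Z}}$ of the finer translation $w\mapsto w+\lambda/p$; then an $\mathbb{R}$-affine map (a shear straightening the spiral onto the real axis followed by a real-direction dilation, each of which descends to $\mathcal{C}$) taking $\{m\lambda/p\}$ onto $(\log 2)\mathbb{Z}$. Composing and exponentiating yields a quasiconformal self-homeomorphism of $\mathbb{C}^{\ast}$ mapping $E$ onto $\{2^{n}\}$, that is, (i).

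The main obstacle is the first of these two steps, namely absorbing an arbitrary number $p\ge 1$ of punctures per period into the single puncture of $R'$; this is genuinely necessary because $p>1$ does occur, for instance for $E=\{2^{n}\}\cup\{3\cdot 2^{n-1}\}$. The point to get right is that the re-spacing map must commute with the period translation, so that its equivariant extension is globally quasiconformal, and that its dilatation is controlled. This control is exactly what the relative compactness of the period supplies: the map is constructed on a compact cylinder piece carrying finitely many marked points, hence is bi-Lipschitz there, and its dilatation does not deteriorate as one runs through the infinitely many periods. Once this uniform bound is in hand the construction closes, and with it the cycle (i)$\Rightarrow$(ii)$\Rightarrow$(iii)$\Rightarrow$(i).
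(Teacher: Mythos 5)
Your overall strategy (compactness of the fundamental annulus for the finiteness statement, an explicit equivariant construction in the logarithmic coordinate for (iii)$\Rightarrow$(i)) is the one the paper has in mind --- the paper in fact states Theorem \ref{thm2} without proof, only remarking that the relative compactness of the fundamental domain makes the argument easy --- but your structural lemma, and with it the claim that (ii) holds automatically under the standing hypothesis, is false as stated. You assert that every $h\in\aut_{\infty}(\mathbb{C}^{\ast}\setminus E)$ extends to an automorphism of $\mathbb{C}^{\ast}$ because ``$0,\infty$ are the two non-isolated ends.'' That is only true when $E$ accumulates at both $0$ and $\infty$, which is not part of the hypotheses. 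If $E$ accumulates at only one of them, say $\infty$, then $0$ is an isolated puncture of $\mathbb{C}^{\ast}\setminus E$, the extension of $h$ is an affine automorphism of $\mathbb{C}$ permuting $\{0\}\cup E$ (it may move $0$ into $E$), and such an $h$ can have infinite order: for $E=\mathbb{Z}\setminus\{0\}$ one has $\mathbb{C}^{\ast}\setminus E=\mathbb{C}\setminus\mathbb{Z}$ and $h(z)=z+1\in\aut_{\infty}(\mathbb{C}^{\ast}\setminus E)$, yet $(\mathbb{C}^{\ast}\setminus E)/\langle h\rangle$ is a thrice-punctured sphere, not a punctured torus. So (ii) is \emph{not} automatic, and your ``trivial'' proof of (i)$\Rightarrow$(ii) collapses.

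The gap is repairable, but you must actually use (i) (respectively (iii)) to rule out this translation case before invoking compactness. Under (i), a quasiconformal equivalence with $R'$ extends, by removability of the isolated punctures, to a quasiconformal self-homeomorphism of $\hat{\mathbb{C}}$ carrying $\{0,\infty\}\cup E$ onto $\{0,\infty\}\cup\{2^{n}\}_{n\in\mathbb{Z}}$, hence carrying accumulation sets onto accumulation sets; since the latter set accumulates at both $0$ and $\infty$, so does the former, and only then does your classification $h(z)=az$ with $|a|\neq 1$, together with the compact-annulus argument, apply. Under (iii), the translation case is excluded because the quotient of $\mathbb{C}\setminus(\{0\}\cup E)$ by a cyclic group of affine maps always has genus $0$, never a torus. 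With these two observations inserted, the rest of your argument --- finiteness of $E$ modulo $\langle h\rangle$, and the two-step equivariant quasiconformal map on the cylinder (re-spacing the $p$ marked points on one compact period, then the $\mathbb{R}$-linear map sending $\lambda/p\mapsto\log 2$ and fixing $2\pi i\mathbb{Z}$) --- is correct and gives a sound, more explicit alternative to the paper's implicit route of lifting a quasiconformal homeomorphism between the finitely punctured quotient tori, in the style of the sufficiency part of Theorem \ref{thmA}.
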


Moreover, a theorem similar to Corollary \ref{coro1} also holds.
In this case, the space corresponding to $T_0$ simultaneously describes
all quasiconformal deformations of all Riemann surfaces of finite type
$(1,n)$ with $n\geq 1$, and has the same properties of $T_0$, separability and geodesic convexity.

\subsection{Natural question}
With the observations mentioned above, a natural question arises; Does an analogous theorem 
hold for Riemann surfaces which have the following property?
\begin{itemize}
\item It has an automorphism of infinite order.
\item For any automorphism of infinite order, the quotient space by the action of its cyclic group is of finite type.
\end{itemize} 
Namely, is the above property preserved by quasiconformal deformations?

For example, the Riemann surface defined by $\displaystyle w^2= z\prod_{n=1}^{\infty}\left(1-\frac{z^2}{n^2}\right)$ has the
above property.


\begin{bibdiv}
\begin{biblist}

\bib{ahlfors2}{article}{
      author={Ahlfors, L.~V.},
       title={Quasiconformal reflections},
        date={1963},
        ISSN={0001-5962},
     journal={Acta Math.},
      volume={109},
       pages={291\ndash 301},
      review={\MR{0154978 (27 \#4921)}},
}

\bib{beurling2}{article}{
      author={Beurling, A.},
      author={Ahlfors, L.~V.},
       title={Conformal invariants and function-theoretic null-sets},
        date={1950},
        ISSN={0001-5962},
     journal={Acta Math.},
      volume={83},
       pages={101\ndash 129},
      review={\MR{0036841 (12,171c)}},
}

\bib{macmanus1}{article}{
      author={MacManus, P.},
       title={Catching sets with quasicircles},
        date={1999},
        ISSN={0213-2230},
     journal={Rev. Mat. Iberoamericana},
      volume={15},
      number={2},
       pages={267\ndash 277},
         url={http://dx.doi.org.ejgw.nul.nagoya-u.ac.jp/10.4171/RMI/256},
      review={\MR{1715408 (2000h:30031)}},
}

\bib{mcmullen1}{article}{
      author={McMullen, C.},
       title={Amenability, {P}oincar\'e series and quasiconformal maps},
        date={1989},
        ISSN={0020-9910},
     journal={Invent. Math.},
      volume={97},
      number={1},
       pages={95\ndash 127},
         url={http://dx.doi.org.ejgw.nul.nagoya-u.ac.jp/10.1007/BF01850656},
      review={\MR{999314 (90e:30048)}},
}

\bib{vaisala2}{book}{
      author={V{\"a}is{\"a}l{\"a}, J.},
       title={Lectures on {$n$}-dimensional quasiconformal mappings},
      series={Lecture Notes in Mathematics, Vol. 229},
   publisher={Springer-Verlag, Berlin-New York},
        date={1971},
      review={\MR{0454009 (56 \#12260)}},
}

\bib{vaisala3}{article}{
      author={V{\"a}is{\"a}l{\"a}, J.},
       title={Porous sets and quasisymmetric maps},
        date={1987},
        ISSN={0002-9947},
     journal={Trans. Amer. Math. Soc.},
      volume={299},
      number={2},
       pages={525\ndash 533},
         url={http://dx.doi.org.ejgw.nul.nagoya-u.ac.jp/10.2307/2000511},
      review={\MR{869219 (88a:30049)}},
}

\bib{vuorinen3}{article}{
      author={Vuorinen, M.},
       title={On {T}eichm\"uller's modulus problem in {${\bf R}^n$}},
        date={1988},
        ISSN={0025-5521},
     journal={Math. Scand.},
      volume={63},
      number={2},
       pages={315\ndash 333},
      review={\MR{1018820 (90k:30038)}},
}

\end{biblist}
\end{bibdiv}


\end{document}